\journal{Computational Statistics & Data Analysis}
\pgfplotsset{compat=1.18}
\setlist[enumerate]{leftmargin=.5in}
\setlist[itemize]{leftmargin=.5in}
\newtheorem{theorem}{Theorem}
\newtheorem{assumption}{Assumption}
\newtheorem{lemma}{Lemma}
\newtheorem{corollary}{Corollary}
\newtheorem{definition}{Definition}
\newtheorem{remark}{Remark}
\begin{document}

\begin{frontmatter}



\title{Robust Low-rank Tensor Regression via Clipping and Huber Loss}

\author[label1,label2]{Kangqiang Li}
\ead{11935023@zju.edu.cn}
\author[label2]{Bingqi Liu\corref{cor1}}
\ead{bqliu@zju.edu.cn}
\author[label1]{Yang Yang}
\author[label3]{Li Wang}

\affiliation[label1]{organization={Information Center, Hubei Provincial Tobacco Monopoly Administration},
             city={Wuhan},
             citysep={},
             postcode={430030},
             state={Hubei},
             country={China}}
\affiliation[label2]{organization={School of Mathematical Sciences, Zhejiang University},
             city={Hangzhou},
             citysep={},
             postcode={310058},
             state={Zhejiang},
             country={China}}

\affiliation[label3]{organization={Procurement Management Office, Hubei Provincial Tobacco Monopoly Administration},
             city={Wuhan},
             citysep={},
             postcode={430030},
             state={Hubei},
             country={China}}

\cortext[cor1]{Corresponding author}


\begin{abstract}
In this paper, we construct a parameter estimation framework for robust low-rank tensor regression based on a truncation method and Huber loss, specifically focusing on models with random noise having only finite second-order moments. Through a robust gradient descent method, our proposed Huber-type estimator is theoretically optimal in two aspects: (1) its statistical error rate matches the optimal upper bound established for the traditional least squares method under sub-Gaussian error; and (2) the sample complexity for recovering the tensor parameter is also optimal. Extensive numerical experiments demonstrate the robustness of our estimator, indicating that the utilization of truncation and Huber loss significantly enhances stability and statistical effectiveness, outperforming the traditional least squares method. Additionally, the phenomenon of phase transition in the convergence rate of the proposed estimator is confirmed through simulation. Furthermore, applications to image recovery and the Beijing air-quality dataset demonstrate the practical effectiveness of our method.
\end{abstract}



\begin{keyword}
Tensor regression \sep Huber loss \sep Truncation \sep Heavy tails \sep Robust estimation

\MSC[2020] 62F35 \sep 62H12 \sep 68T05 \sep 15A69
\end{keyword}

\end{frontmatter}



\section{Introduction}
\label{sec:intro}

In recent years, the tail-robustness issue in parameter regression estimation has become an interesting research highlight. A primary reason for this attention is that many well-used statistical methods and algorithms based on the sub-Gaussian assumption suffer from heavy-tailed distribution and the real-world datasets, especially in the financial area, often exhibit heavy tails. Therefore, some effective approaches to adequately estimate regression parameter with the heavy-tailed noise have been proposed by numerous literature. One of the popular ways is to replace the traditional squared loss with robust alternatives such as absolute loss, quantile loss, Huber loss \citep{Huber1964} and Cauchy loss. This type of robust technique was originally aimed at achieving the outlier-robustness. Recently, \cite{Fan2016} first employed the Huber loss into linear regression problem to investigate the robustness against heavy-tailness of the regression error. Their theoretical result unveils that under only finite second order moment condition on the noise, the proposed robust estimator has the same optimal rate as the case of sub-Gaussian tails via carefully tuning the robustification parameter of Huber loss. \cite{Sun2020} revisited this adaptive Huber regression with only bounded $(1+\epsilon)$-th moment noise, and found a tight phase transition phenomenon for the convergence rate of the regression parameter. \cite{Wang2021} proposed a data-driven approach for Huber-type robust mean estimation and linear regression such that the robustification parameter can efficiently be tuned. There are also a broad range of other literature using the adaptive Huber loss to study regression problems with heavy-tailed errors (see, for example, \cite{Zhou2018} and \cite{Luo2022} for linear regression, \cite{Fan2019} for large-scale multiple testing, \cite{Shen2025} for matrix recovery and \cite{Fan2024} for neural networks).
	
Another convenient and effective way to handle heavy-tailed data is to directly clip the samples, which has become widely used. \cite{Fan2021} proposed a shrinkage principle for low-rank matrix recovery with heavy-tailed data. Specifically, they truncated large heavy-tailed responses or covariates and used the clipped data in the least-squares method. Their theoretical results show that, under mild moment constraints, the robust estimator achieves a similar statistical error rate as in the case with sub-Gaussian tails. Subsequently, \cite{Zhu2021} adopted this robust methodology in generalized linear models and obtained good theoretical and numerical results. Inspired by \cite{Fan2021}, \cite{Wang2023} and \cite{Liu2025} studied high-dimensional vector autoregression with heavy-tailed time series data. Both of their robust estimation procedures required clipping the data vector.

As more and more datasets appear in the form of tensors, conventional methods for studying regression problems based on vector and matrix-valued data are inadequate. Therefore, significant developments have been made in tensor regression. For example, \cite{Lu2020} proposed an estimation procedure to estimate the tensor parameter in high-dimensional quantile regression. \cite{Han2022} introduces a unified statistical and computational framework for generalized low-rank tensor estimation under various probabilistic models, including sub-Gaussian tensor PCA, tensor regression, and Poisson and binomial tensor PCA. Under general deterministic conditions, they establish both statistical error bounds and linear convergence rates. The method is shown to achieve minimax optimal rates in several settings and is validated through simulations and real-data applications such as photon-limited imaging and click-through prediction.

To further fill the gap in robust estimation under tensor regression models and make progress in overcoming heavy-tailed errors from a statistical perspective, this paper investigates tensor regression models with heavy-tailed errors that have only finite second-order moments. By designing a robust gradient descent algorithm, we obtain a Huber-type robust estimator, which theoretically achieves the same optimal estimation error as in the case with sub-Gaussian noise, as shown in \cite{Han2022}. We also optimize the sample complexity through careful proofs under mild constraints. The magnitudes of the robustification parameters used to control the bias and tail robustness are specified with respect to the dimensionality, rank, sample size, and finite second-order moments. Furthermore, to accommodate asymmetric additive errors, we generalize the Huber loss to a broader class of loss functions. Numerical simulations demonstrate that our estimator outperforms that of \cite{Han2022} under both homogeneous and heteroscedastic models. Finally, we apply our method to image recovery. Compared with the original algorithm, the robust Huber loss-based algorithm shows a significant improvement in recovery performance.

The remainder of the paper is organized as follows: Section \ref{SecDef} gives the mathematical notation and definitions used in this paper. Section \ref{sec2} presents our Huber-type parameter estimator for tensor regression with heavy-tailed errors and gives an upper bound in theory. Section \ref{secapp} shows numerical experiments and empirical analysis to illustrate the validity of the methods in this paper. Section \ref{discussion} gives the summary and discussion of this paper. Finally, the proofs of theorems are shown in Appendix.

\section{Notation and definitions}
\label{SecDef}
For any positive integer $n$, we denote the set $\{1,2,\ldots,n\}$ by $[n]$. Uppercase letters denote matrices, while calligraphic letters denote 3-order tensors. For two matrices $X, Y \in \mathbb{R}^{p_{1}\times p_{2}}$ and two tensors $\mathcal{X}, \mathcal{Y} \in \mathbb{R}^{p_{1}\times p_{2}\times p_{3}}$, $\langle X,Y\rangle:=\text{tr}(X^{\top}Y)$ and $\langle \mathcal{X},\mathcal{Y}\rangle:=\sum_{i,j,k}\mathcal{X}_{(i,j,k)}\mathcal{Y}_{(i,j,k)}$.  The Frobenius norms of $X$ and $\mathcal{X}$ are defined as $\|X\|_{F}=\sqrt{\sum_{i,j}X_{(i,j)}^2}$ and $\|\mathcal{X}\|_{F}=\sqrt{\sum_{i,j,k}\mathcal{X}_{(i,j,k)}^2}$, respectively. The nuclear norm and spectral norm are defined as  $\|A\|_{\star}=\text{tr}\left(\sqrt{A^\top A}\right)$ and $\|A\|_{\text{op}}=\sqrt{\lambda_{\max}\left(A^\top A\right)}$. The unit Euclidean sphere on $d$-dimensional space is denoted by $\mathbb{S}^{d-1}$. $\mathbb{O}^{p\times q}:=\left\{\mathbf{U} \in \mathbb{R}^{p \times q}: \mathbf{U}^{\top} \mathbf{U}=\mathbf{I}_q\right\}$.
	As a higher-order generalization of principal component analysis, the Tucker decomposition of a tensor is presented below:
	
\begin{definition}[Tensor Tucker decomposition]\label{def1}
For a given tensor $\mathcal{A}\in \mathbb{R}^{p_{1}\times p_{2}\times p_{3}}$, if there exist a tensor $\mathcal{S}\in \mathbb{R}^{r_{1}\times r_{2}\times r_{3}}$ and a matrix $\mathbf{U}_{i}\in \mathbb{O}^{p_{i}\times r_{i}}$ where $r_{i}\le p_{i}$, $i \in [3]$ such that  $\mathcal{A}=\mathcal{S} \times_{1} \mathbf{U}_{1}\times_{2} \mathbf{U}_{2}\times_{3} \mathbf{U}_{3}$, then $\mathcal{A}$ is said to be the rank-$(r_{1},r_{2},r_{3})$ tensor, and $\mathcal{S}$ is called the kernel tensor. For convenience, denote 
\begin{align*}
\mathcal{A}=\mathcal{S} \times_{1} \mathbf{U}_{1}\times_{2} \mathbf{U}_{2}\times_{3} \mathbf{U}_{3}=:\llbracket \mathcal{S} ; \mathbf{U}_{1}, \mathbf{U}_{2},\mathbf{U}_{3} \rrbracket,	
\end{align*}
where $\left(\mathcal{S} \times_{1} \mathbf{U}_{1}\right)_{i_{1} i_{2} i_{3}}:=\sum_{j=1}^{r_{1}} \mathcal{S}_{j i_{2} i_{3}}\left(\mathbf{U}_{1}\right)_{i_{1} j}$. $\mathcal{S} \times_{2} \mathbf{U}_{2}$ and $\mathcal{S} \times_{3} \mathbf{U}_{3}$ for $\mathbf{U}_{2}\in \mathbb{R}^{p_{2}\times r_{2}},\mathbf{U}_{3}\in \mathbb{R}^{p_{3}\times r_{3}}$ are defined in a similar way.
\end{definition}

Figure \ref{fig1} shows a schematic diagram of the tensor Tucker decomposition.

\newcommand{\Depth}{2.5}
\newcommand{\Height}{2.5}
\newcommand{\Width}{2.5}	
\begin{figure}[htb]
		\centering
		\begin{tikzpicture}
			\coordinate (O) at (-0.5,0,0);
			\coordinate (A) at (-0.5,\Width,0);
			\coordinate (B) at (-0.5,\Width,\Height);
			\coordinate (C) at (-0.5,0,\Height);
			\coordinate (D) at (\Depth-0.2,0,0);
			\coordinate (E) at (\Depth-0.2,\Width,0);
			\coordinate (F) at (\Depth-0.2,\Width,\Height);
			\coordinate (G) at (\Depth-0.2,0,\Height);
			\draw[red!60!black,fill=red!5] (O) -- (C) -- (G) -- (D) -- cycle;
			\draw[red!60!black,fill=red!5] (O) -- (A) -- (E) -- (D) -- cycle;
			\draw[red!60!black,fill=red!5] (O) -- (A) -- (B) -- (C) -- cycle;
			\draw[red!60!black,fill=red!15,opacity=0.8] (D) -- (E) -- (F) -- (G) -- cycle;
			\draw[red!60!black,fill=red!5,opacity=0.6] (C) -- (B) -- (F) -- (G) -- cycle;
			\draw[red!60!black,fill=red!5,opacity=0.8] (A) -- (B) -- (F) -- (E) -- cycle;

			\draw (-0.08,-1.4,0) node {\normalsize{$p_{2}$}};
			\draw (-0.08,-1.1,0) node[rotate = 0] {{\color{gray!75}$\underbrace{\hspace{2.8cm}}$}};
			\draw (-1.1,1.05,2) node[rotate = 0] {\normalsize{$p_{1}$}};
			\draw (-0.8,1.05,2) node[rotate = 270] {{\color{gray!75}$\underbrace{\hspace{2.5cm}}$}};
			\draw (2.8,0,1.55) node[rotate = 45] {\normalsize{$p_{3}$}};
			\draw (2.45,0,1.4) node[rotate = 45] {{\color{gray!75}$\underbrace{\hspace{1.35cm}}$}};
			
			\draw (0.3,0.6,1) node {\Large{$\mathcal{A}$}};
			\draw (3.5,1,1) node {\Large{=}};
			
			\coordinate (O) at (6.3,0.2,0);
			\coordinate (A) at (6.3,\Width/2+0.2,0);
			\coordinate (B) at (6.3,\Width/2+0.2,\Height/2);
			\coordinate (C) at (6.3,0.2,\Height/2);
			\coordinate (D) at (6.6+\Depth/2,0.2,0);
			\coordinate (E) at (6.6+\Depth/2,\Width/2+0.2,0);
			\coordinate (F) at (6.6+\Depth/2,\Width/2+0.2,\Height/2);
			\coordinate (G) at (6.6+\Depth/2,0.2,\Height/2);
			
			\coordinate (B1) at (4.3,\Width/2+0.7,\Height/2);
			\coordinate (C1) at (4.3,-1+0.7,\Height/2);
			\coordinate (F1) at (4.3+\Depth/2,\Width/2+0.7,\Height/2);
			\coordinate (G1) at (4.3+\Depth/2,-1+0.7,\Height/2);
			\draw[black!60!black,fill=black!5] (O) -- (C) -- (G) -- (D) -- cycle;
			\draw[black!60!black,fill=black!5] (O) -- (A) -- (E) -- (D) -- cycle;
			\draw[black!60!black,fill=black!5] (O) -- (A) -- (B) -- (C) -- cycle;
			\draw[black!60!black,fill=black!15,opacity=0.8] (D) -- (E) -- (F) -- (G) -- cycle;
			\draw[black!60!black,fill=black!5,opacity=0.6] (C) -- (B) -- (F) -- (G) -- cycle;
			\draw[black!60!black,fill=black!5,opacity=0.8] (A) -- (B) -- (F) -- (E) -- cycle;
			\draw (7.1,-0.1,\Height/2) node {\normalsize{$r_{1}\times r_{2}\times r_{3}$}};
			\draw (7.1,0.85,\Height/2) node {\large{$\mathcal{S}$}};
			
			\draw (4.9,-0.7,1) node {\normalsize{$p_{1}\times r_{1}$}};
			
			\draw[blue!60!black,fill=blue!5] (C1) -- (B1) -- (F1) -- (G1) -- cycle;
			
			\coordinate (A2) at (7.3,\Width/2+1.2,-1);
			\coordinate (B2) at (7.3,\Width/2+1.2,\Height-1);
			\coordinate (E2) at (7.5+\Depth/2,\Width/2+1.2,-1);
			\coordinate (F2) at (7.5+\Depth/2,\Width/2+1.2,\Height-1);
			\draw[orange!60!black,fill=orange!5,opacity=1] (A2) -- (B2) -- (F2) -- (E2) -- cycle;
			
			\coordinate (O3) at (8.35,-1,0);
			\coordinate (A3) at (8.35,\Width/2-0.2,0);
			\coordinate (D3) at (8.35+\Depth/2,-1,0);
			\coordinate (E3) at (8.35+\Depth/2,\Width/2-0.2,0);
			\draw[green!60!black,fill=green!5] (O3) -- (A3) -- (E3) -- (D3) -- cycle;
			
			\draw (5.8,0.7,1) node {\small{$\times_{1}$}};
			\draw (8.35,0.3,1) node {\small{$\times_{3}$}};
			\draw (7.1,1.43,-0.65) node {\small{$\times_{2}$}};
			
			\draw (9,-1.3,0) node {\normalsize{$p_{3}\times r_{3}$}};
			\draw (8.95,0,0) node {\normalsize{$\mathbf{U_{3}}$}};
			\draw (4.8,0.7,1) node {\normalsize{$\mathbf{U_{1}}$}};
			\draw (7,1.4,-2.3) node {\normalsize{$\mathbf{U_{2}}$}};
			\draw (5.6,1.5,-2.2) node {\normalsize{$p_{2}\times r_{2}$}};
		\end{tikzpicture}
		\caption{Tucker decomposition of  tensor $\mathcal{A}=\llbracket \mathcal{S} ; \mathbf{U}_{1}, \mathbf{U}_{2},\mathbf{U}_{3} \rrbracket\in \mathbb{R}^{p_{1}\times p_{2}\times p_{3}}$.}
		\label{fig1}
\end{figure}
The matricization operator $\mathcal{M}_{k}: \mathbb{R}^{p_{1}\times p_{2}\times p_{3}}\to \mathbb{R}^{p_{k}\times (p_{k+1}p_{k+2})}$ is defined as
\begin{align*}
	\left[\mathcal{M}_{k}(\mathcal{A})\right]_{i_{k}, i_{k+1}+p_{k+1}\left(i_{k+2}-1\right)}=\mathcal{A}_{i_{1} i_{2} i_{3}}\quad \text{for all}\quad \mathcal{A}\in \mathbb{R}^{p_{1}\times p_{2}\times p_{3}},
\end{align*}
where $k+1$ and $k+2$ are computed with modulo $3$. The low-rank tensor can be Tucker decomposed by the following higher-order singular value decomposition algorithm proposed by \cite{DeLathauwer2000}.

\begin{algorithm}
    \caption{High order singular value decomposition (HOSVD)}
    \label{Al0}
    \begin{algorithmic}[1]
        \REQUIRE rank-$\left(r_1, r_2, r_3\right)$ tensor $\mathcal{Y} \in \mathbb{R}^{p_1 \times p_2 \times p_3}$.
        \STATE $\mathbf{U}_k=\operatorname{SVD}_{r_k}\left(\mathcal{M}_k(\mathcal{Y})\right), k=1,2,3$
        \STATE $\mathcal{S}=\llbracket \mathcal{Y} ; \mathbf{U}_1^{\top}, \mathbf{U}_2^{\top}, \mathbf{U}_3^{\top} \rrbracket$
        \ENSURE $\left(\mathcal{S}, \mathbf{U}_1, \mathbf{U}_2, \mathbf{U}_3\right)$.
    \end{algorithmic}
\end{algorithm}

Let $\kappa=\bar{\lambda}/\underline{\lambda}$. Here, we define $\bar{\lambda}:=\max \left\{\left\|\mathcal{M}_{k}\left(\mathcal{A}^{*}\right)\right\|_{\text{op}}: k\in[3]\right\}$ and $\underline{\lambda}:=\min\left\{\sigma_{r_{k}}\left(\mathcal{M}_{k}\left(\mathcal{A}^{*}\right)\right): k\in[3]\right\}$. Furthermore, let $\bar{p}=\max\{p_{k}:k\in [3]\}$ and $\bar{r} = \max\{r_{k}: k\in [3]\}$. For a differentiable function $f: \mathbb{R}^{p_{1}\times p_{2}\times p_{3}}\to \mathbb{R}$, we write $\nabla f$ as its gradient function. Given two sequences $\{a_{n}\}_{n=1}^{\infty}$ and $\{b_{n}\}_{n=1}^{\infty}$, we use the notation $a_{n} \asymp b_{n}$ if there exist positive constants $C_1$ and $C_2$ such that $C_1 b_{n} \le a_{n} \le C_2 b_{n}$ for all $n$.

\begin{definition}[Huber loss \citep{Huber1964}]\label{def2}
The Huber loss $\ell_{\varpi}(\cdot)$ is defined as
\begin{align*}
	\ell_{\varpi}(x)=\begin{cases}x^{2}/2,&\text{if } |x|\leq\varpi;\\ \varpi|x|-\varpi^{2}/2,&\text{if  } |x|>\varpi,\end{cases}
\end{align*}
where $\varpi$ is a robustification parameter which strikes a balance between the induced bias and tail robustness.
\end{definition}

From the above definition, we obtain the following truncation function by deriving the Huber loss:
\begin{definition}[Clipping function]\label{def3}
Let $\psi_{\tau}(\cdot)$ be a clipping function defined by 
\begin{align*}
	\psi_{\tau}(x)=(|x|\wedge\tau) \operatorname{sign} (x),\quad x\in \mathbb{R},
\end{align*}
where $\tau$ is a robustification parameter. 
\end{definition}

Figure \ref{fig2} shows the image of Huber loss, squared loss and truncation function.

\begin{figure}[htb]
\centering
\subfigure{
			\begin{tikzpicture}[scale=0.5]
				\begin{axis}[ymax=5,ymin=-2,xlabel = $x$,ylabel = {$f(x)$}, line width=0.7mm,legend columns=1,legend pos=south east,title style={font=\Large},legend style={font=\Large},font=\Large,height=13cm, width=15cm, xtick={-3,-2,-1,0,1,2,3},  ymajorgrids=true, xmajorgrids=true]
					\addplot [
					domain=-3:3,
					samples=100,
					color=red,]
					{x^2/2};
					\addplot [
					domain=-1:1,
					samples=100,
					color=blue,
					]
					{x^2/2};
					\addplot [
					domain=1:3,
					samples=100,
					color=blue,
					]
					{x-1/2};
					\addplot [
					domain=-3:-1,
					samples=100,
					color=blue,
					]
					{-1/2-x};
					\addplot [
					domain=-1:1,
					samples=100,
					color=green,
					]
					{x};
					\addplot [
					domain=1:3,
					samples=100,
					color=green,
					]
					{1};
					\addplot [
					domain=-3:-1,
					samples=100,
					color=green,
					]
					{-1};
					\legend{$x^2/2$, $\ell_{\varpi}(x):\varpi=1$,,,$\psi_{\tau}(x):\tau=1$}
				\end{axis}
		\end{tikzpicture}}
\caption{Illustration of Huber loss, square loss and truncation function.}
\label{fig2}
\end{figure}

\section{Main result}\label{sec2}
\subsection{Model and theoretical results}
Suppose $n$ i.i.d. samples $\{y_{i},\mathcal{X}_{i}\}_{i=1}^{n}$ are generated from the following tensor regression:
\begin{equation}\label{eq}
y_{i}=\left\langle\mathcal{X}_{i}, \mathcal{A}^{*}\right\rangle+\varepsilon_{i}\quad\text{with}\quad\mathbb{E}\left(\varepsilon_{i} \mid \mathcal{X}_{i}\right)=0 \quad\text{and}\quad M=\sqrt[k]{\mathbb{E}\left(\mathbb{E}\left(\varepsilon_{i}^{2}|\mathcal{X}_{i}\right)^k\right)}<\infty,
\end{equation}
where $\mathcal{A}^{*} \in \mathbb{R}^{p_{1} \times p_{2} \times p_{3}}$ is a rank-$(r_1, r_2, r_3)$ tensor with $r_{k} \ll p_{k}$, to be estimated and $\{\mathcal{X}_{i}\}_{i=1}^{n}$ are tensor covariates. Since the noise only possesses mild polynomial moment, the traditional least square method is not efficient in estimating the tensor parameter $\mathcal{A}^{*}$ and sensitive to outliers and heavy-tailed data. In order to handle heavy-tailed noise, we propose to utilize Huber loss in Definition \ref{def2} and define the empirical loss function $\mathcal{L}_{\varpi}(\mathcal{A})=\frac{1}{n}\sum_{i=1}^{n}\ell_{\varpi}(y_{i}-\left\langle\mathcal{X}_{i}, \mathcal{A}\right\rangle)$. We solve the following optimization problem to obtain a low-rank estimator for $\mathcal{A}^{*}$ under the heavy-tailed setting:
\begin{align*}
\left(\widehat{\mathcal{S}}, \widehat{\mathbf{U}}_{1}, \widehat{\mathbf{U}}_{2}, \widehat{\mathbf{U}}_{3}\right)=\underset{\mathcal{S}, \mathbf{U}_{1}, \mathbf{U}_{2},\mathbf{U}_{3}}{\arg \min }\left\{\mathcal{L}_{\varpi}\left(\llbracket \mathcal{S} ; \mathbf{U}_{1}, \mathbf{U}_{2},\mathbf{U}_{3} \rrbracket\right)+\frac{a}{2} \sum_{k=1}^{3}\left\|\mathbf{U}_{k}^{\top} \mathbf{U}_{k}-b^{2} \mathbf{I}_{r_{k}}\right\|_{\mathrm{F}}^{2}\right\},	
\end{align*}
where $a,b>0$ are tuning parameters. The regular term $\frac{a}{2} \sum_{k=1}^{3}\left\|\mathbf{U}_{k}^{\top} \mathbf{U}_{k}-b^{2} \mathbf{I}_{r_{k}}\right\|_{\mathrm{F}}^{2}$ ensures that in the process of using gradient descent method, $\{\mathbf{U}_{k}\}_{k=1}^3$ are nonsingular without changing the minimum point. Before we employ the gradient descent method to get an excellent approximation of $\left\llbracket\widehat{\mathcal{S}}; \widehat{\mathbf{U}}_{1}, \widehat{\mathbf{U}}_{2}, \widehat{\mathbf{U}}_{3}\right\rrbracket$, a reliable initialization is necessary. \cite{Han2022} constructed an unbiased estimator $\frac{1}{n}\sum_{i=1}^ny_{i}\mathcal{X}_{i}$ based on the normality assumption for noise, but our noise distribution possesses a heavy-tail. Therefore, the response variable $y_{i}$ needs to be trimmed via the clipping function defined in Definition \ref{def3}.

By the above method, we use the following robust gradient descent algorithm to obtain our Huber-type robust estimator.

\begin{algorithm}[htb]
    \caption{The Robust Gradient Descent Algorithm}
    \label{Al1}
    \begin{algorithmic}[1]
        \REQUIRE $\{y_{i},\mathcal{X}_{i}\}_{i=1}^{n}$, robustification parameters $\tau$ and $\varpi$, tuning parameters $a$ and $b$, step size $\eta$, the number of iteration $T_{\max}$.
        \STATE $\widetilde{\mathcal{A}}=\frac{1}{n}\sum_{i=1}^{n}\psi_{\tau}(y_{i})\mathcal{X}_{i}$
        \STATE $\left(\widetilde{\mathcal{S}},\widetilde{\mathbf{U}}_{1},\widetilde{\mathbf{U}}_{2}, \widetilde{\mathbf{U}}_{3}\right)=\text{HOSVD}\left(\widetilde{\mathcal{A}}\right)$ (HOSVD in Algorithm \ref{Al0})
        \STATE Let $\mathbf{U}_{k}^{(0)}=b\widetilde{\mathbf{U}}_{k}$ for $k\in [3]$ and $\mathcal{S}^{(0)}=\widetilde{\mathcal{S}}/b^3$.
        \FOR{$T=0,1,2,\ldots,T_{\max}-1$}
            \STATE For each $k\in [3]$, we compute the following equation:
            \STATE $\mathbf{U}_{k}^{(T+1)}=\mathbf{U}_{k}^{(T)}-\eta\left(\nabla_{\mathbf{U}_{k}} \mathcal{L}_{\varpi}\left(\mathcal{S}^{(T)}, \dots, \mathbf{U}_{3}^{(T)}\right)+a \mathbf{U}_{k}^{(T)}\left(\mathbf{U}_{k}^{(T) \top} \mathbf{U}_{k}^{(T)}-b^{2} \mathbf{I}\right)\right).$
            \STATE Then:
            \STATE $\mathcal{S}^{(T+1)}=\mathcal{S}^{(T)}-\eta \nabla_{\mathcal{S}} \mathcal{L}_{\varpi}\left(\mathcal{S}^{(T)}, \mathbf{U}_{1}^{(T)}, \mathbf{U}_{2}^{(T)}, \mathbf{U}_{3}^{(T)}\right)$.
        \ENDFOR
        \ENSURE $\mathcal{A}^{(T_{\max})}=\mathcal{S}^{(T_{\max})}\times_{1}\mathbf{U}_{1}^{(T_{\max})} \times_{2}\mathbf{U}_{2}^{(T_{\max})}\times_{3}\mathbf{U}_{3}^{(T_{\max})}$.
    \end{algorithmic}
\end{algorithm}

There are two main differences in Algorithm \ref{Al1} compared with \cite{Han2022} such that it possesses the robustness against heavy-tailed errors of noise:

\begin{enumerate}[label=(\arabic*)]
	\item The initializer $\widetilde{\mathcal{A}}$ contains a robustification parameter $\tau$ which clips the response $\{y_{i}\}_{i=1}^n$. the clipping technique has been successfully applied by \cite{Fan2021} and \cite{Zhu2021}. Some specific benefits of doing so are stated in Remark \ref{remark1}.
		\item We adopt Huber loss rather than the ordinary least squares considered by \cite{Han2022}.
\end{enumerate}

The statistical guarantee of $\mathcal{A}^{(T_{\max})}$ is presented as follows.
\begin{theorem}\label{theorem1}
	Assume that the following conditions hold:
	\begin{enumerate}[label=(\arabic*)]
		\item Let all entries of $\mathcal{X}$ be i.i.d. sampled from sub-Gaussian distribution with mean-zero, variance-one and $\left\|\mathcal{X}_{(j,k,l)}\right\|_{\psi_{2}}\leq K<\infty$. Assume that  $\{\mathcal{X}_{i}\}_{i=1}^{n}$ are i. i. d. copies of $\mathcal{X}$ and $\{y_{i}\}_{i=1}^n$ are i.i.d. from (\ref{eq}).
		\item Denote $\bar{\lambda}:=\max \left\{\left\|\mathcal{M}_{k}\left(\mathcal{A}^{*}\right)\right\|_{\text{op}}: k\in[3]\right\}$ and $\underline{\lambda}:=\min\left\{\sigma_{r_{k}}\left(\mathcal{M}_{k}
		\left(\mathcal{A}^{*}\right)\right): k\in[3]\right\}$, where $\bar{p}=\max\{p_{k}:k\in [3]\}$ and $\bar{r}=\max\{r_{k}:k\in [3]\}$. There exist some positive constants $\{c_{i}\}_{i=1}^3$ such that $\underline{r}\geq c_{1}\bar{r}$, $M\leq c_{2} \|\mathcal{A}^{*}\|_{F}^2$, $\underline{r}\geq \left(\sqrt{\bar{p}}\log(\bar{p})\right)^{\frac{1}{3}}$ and $\underline{\lambda}\geq c_{3}$.
	\end{enumerate}

If $\varpi\asymp_{K,k}\left(Mn/df\right)^{\frac{1}{2}}, \tau\asymp_{K,k}\left(Rn/df\right)^{\frac{1}{2}}$, $b \asymp \bar{\lambda}^{1 / 4}$ and $a\asymp\frac{\bar{\lambda}}{\kappa^{2}}$ are chosen, then for $\forall t> \log(13)$, there exist positive constants $c_{0},c_{1},c_{2}$ and $\{C_{i}\}_{i=1}^5$ such that as long as $\eta>c_{0}\bar{\lambda}^{-\frac{3}{2}}$,
\begin{align*}
	n>C_{1} \max\left\{{R}^2\bar{p}^{\frac{3}{2}}\bar{r}, R\kappa^2\bar{p}^{\frac{3}{2}}\sqrt{\bar{r}},\kappa^6\bar{p}\bar{r}^2\right\}\quad\text{and}\quad T_{\max}>C_{2} \log\left(\frac{n\underline{\lambda}}{M df\kappa^{4}}\right),
\end{align*}
with probability at least $1-6\exp(-c_{1}\bar{p})-4\exp\left(df(\log(13)-t)\right)\allowbreak -C_{3}T_{\max}\allowbreak\exp(-C_{4}df)\allowbreak -(T_{\max}+4)e^{-\bar{p}^{\frac{3}{2}}\bar{r}}-4\bar{p}^{-c_{2}}$, we have
\begin{align*}
	\left\|\mathcal{A}^{(T_{\max})}-\mathcal{A}^{*}\right\|_{F}<C_{5} \kappa^{2}\sqrt{Mdf/n}\left(\sqrt{t}+t\right),
\end{align*}
where $df:=r_{1}r_{2}r_{3}+\sum_{i=1}^3p_{i}r_{i}$, $\kappa=\bar{\lambda}/\underline{\lambda}$ and $R:=\mathbb{E}[y_{i}^2]$.
\end{theorem}

\begin{remark}\label{remark1}
From Theorem \ref{theorem1}, it follows that under the bounded second-order moments condition, our proposed estimator realizes the minmax optimal rate of convergence established by \cite{Han2022}. On the other hand, the sample complexity $n\gtrsim \bar{p}^{\frac{3}{2}}\bar{r}$ is also optimal if $R$ is smaller than some fixed constant.
\end{remark}

\begin{remark}\label{remark2}
In the presence of heavy-tailed noises, the initializer $\widetilde{\mathcal{A}}$ offers some key virtues in both theoretical and numerical aspects. First, after appropriate truncation, $\widetilde{\mathcal{A}}$ has a non-asymptotic upper bound with an exponential-type exception probability. Therefore, it is a good initialization and well reflects the parameter tensor $\mathcal{A}^{*}$, which can provide a firm foundation for rank-$(r_{1}, r_{2}, r_{3})$ estimation. Besides, $\widetilde{\mathcal{A}}$ can reduce the required number of iterations $T_{\max}$ and enhance the stability of Algorithm \ref{Al1}.
\end{remark}

\begin{remark}\label{remark3}
The constraints on $\underline{r}$ in condition (2) is easily satisfied. For example, if $\bar{p}=10^4$, we only need $\underline{r}\geq 10$. Note that the scales of $\tau$ and $\varpi$ are related to the rank-$(r_1, r_2,r_3)$, but in practice, the rank is often unknown. Therefore, in order to overcome this drawback, the estimate of  $\{r_{k}\}_{k=1}^{3}$ can be obtained by substituting the robust initial value $\widetilde{\mathcal{A}}$ into the rank selection method of \cite{Han2022}:
\begin{align*}
\hat{r}_k=\max \left\{r: \sigma_r\left(\mathcal{M}_k(\widetilde{\mathcal{A}})\right) \geq c\delta_k\right\}, \quad k=1,2,3,	
\end{align*}
where $\delta_k:=\operatorname{Median}\left\{\sigma_1\left(\mathcal{M}_k(\widetilde{\mathcal{A}})\right), \ldots, \sigma_{p_k}\left(\mathcal{M}_k(\widetilde{\mathcal{A}})\right)\right\}$  and $c>0$ are threshold levels.
\end{remark}

\begin{remark}\label{remark4}
Without considering the theoretical upper bound on the initial value $\widetilde{\mathcal{A}}$, from the proof of Theorem \ref{theorem1}, we can further obtain that when the error term has only finite $(1+\delta)$-order moments, i.e., for $\delta>0$, $M_{\delta}:=\sqrt[k]{\mathbb{E}\left(\mathbb{E}\left(|\varepsilon_{i}|^{1+\delta}|\mathcal{X}_{i}\right)^k\right)}<\infty$, if we choose $\varpi \asymp_{K, k}\left(M_\delta^{1 / k} n / d f\right)^{\frac{1}{1+\delta}}$, then we have
\begin{align*}
\left\|\mathcal{A}^{\left(T_{\max }\right)}-\mathcal{A}^{*}\right\|_F\lesssim \kappa^2 M_\delta^{\frac{1}{k(1+\delta)}}\left(\frac{d f}{n}\right)^{\frac{\delta}{1+\delta}} \text{with high probability holds.}	
\end{align*}
It can be seen that the estimator has a smooth phase transition when $0<\delta<1$, which is similar to the adaptive Huber linear regression established by \cite{Sun2020}. The simulation experiments in next section confirm the phenomenon.
\end{remark}

The following corollary clarifies that as the sample size increases, we do not need the additional constraint to be added to the rank $\underline{r}$.

\begin{corollary}
Without the constraint condition $\underline{r}\geq \left(\sqrt{\bar{p}}\log(\bar{p})\right)^{\frac{1}{3}}$, the conclusion of Theorem \ref{theorem1} would still hold as long as 
\begin{align*}
	\tau\asymp_{K,k}\|\mathcal{A}^{*}\|_{F}\left(n/df\right)^{\frac{1}{2}}, \quad n\gtrsim \allowbreak \max\left\{\kappa^4 \bar{p}^2 \log (\bar{p}),\allowbreak \kappa^4 \bar{p}^{\frac{3}{2}} \bar{r}^3, \kappa^6\bar{p}\bar{r}^2\right\}
\end{align*}
are chosen.
\end{corollary}

\subsection{Generalization of the loss function}
	Since different robust loss functions have different results when dealing with different types of heavy-tailed errors, in this subsection we generalize Huber loss to asymmetric loss functions \citep{Man2024} which only need to satisfy the following assumptions: 
	\begin{assumption}\label{assumption}
		Let $\ell_\varpi(x)$ fulfill the following conditions:
		\begin{enumerate}[label=(\arabic*)]
			\item $\ell_\varpi(x)=\varpi^2 \ell_{1}(x / \varpi)$, where $\ell_{1}: \mathbb{R} \mapsto[0, \infty)$;
			\item $\ell_{1}^{\prime}(0)=0$, for $\forall x \in \mathbb{R}$, $\left|\ell_{1}^{\prime}(x)\right| \leq \min \left(c_1,|x|\right)$ and $\left|\ell_{1}^{\prime}(x)-x\right| \leq c_{2}x^2$;
			\item $\ell_{1}^{\prime \prime}(0)=1$ and for $\forall |x| \leq c_3$, $\ell_{1}^{\prime \prime}(x) \geq c_4$, where $\{c_{i}\}_{i=1}^4$ is a positive constant.
	\end{enumerate}
\end{assumption}

Assumption \ref{assumption}  contains a variety of loss functions such as Huber loss, Tukey's biweight loss
\begin{align*}
	\ell_{\varpi}(x)= \begin{cases}\frac{\varpi^2}{6}\left(1-\left(1-\frac{x^2}{\varpi^2}\right)^3\right), & \text { if }|x| \leq \varpi, \\ \frac{\varpi^2}{6}, & \text { if }|x|>\varpi.\end{cases}
\end{align*}
and Cauchy loss $\ell_{\varpi}(x)=\frac{\varpi^2}{2} \log \left(1+\frac{x^2}{\varpi^2}\right)$. Using a robust approach similar to the previous section, define the empirical loss as $\mathcal{L}_{\varpi}(\mathcal{A})=\frac{1}{n}\sum_{i=1}^{n}\ell_{\varpi}(y_{i}-\left\langle\mathcal{X}_{i}, \mathcal{A}\right\rangle)$ and for the estimation of the initial values, we use the derivative function satisfying Assumption \ref{assumption} to truncate the response variable and obtain $\widetilde{\mathcal{A}}=\frac{1}{n}\sum_{i=1}^{n}\ell_{\tau}^{\prime}(y_{i})\mathcal{X}_{i}$. Based on this, the following theorem shows that the estimates produced by Algorithm \ref{Al1} are still optimal in terms of the statistical error and sample complexity.
\begin{theorem}\label{theorem2}
	Under Assumption \ref{assumption} and the conditions of Theorem \ref{theorem1}, the following conclusion holds: 
	For $\forall t> \log(13)$, with probability at least $1-6\exp(-c\bar{p})-4\exp\left(df(\log(13)-t)\right)-C_{2}T_{\max}\exp(-C_{3}df)-(T_{\max}+4)e^{-\bar{p}^{\frac{3}{2}}\bar{r}}-4\bar{p}^{-c}$,
	we have
	\begin{align*}
		\left\|\mathcal{A}^{(T_{\max})}-\mathcal{A}^{*}\right\|_{F}<C_{4} \kappa^{2}\sqrt{Mdf/n}\left(\sqrt{t}+t\right).
	\end{align*}
\end{theorem}

\section{Simulation analysis and application}\label{secapp}
\subsection{Numerical experiments}
In this section, we illustrate the statistical performance of Algorithm \ref{Al1}. Let $r_{1}=r_{2}=r_{3}=r$ and $p_{1}=p_{2}=p_{3}=p$ to facilitate simulation. The parameter tensor $\mathcal{A}^{*}=\llbracket \mathcal{S}^{\star} ; \mathbf{U}_{1}^{\star}, \mathbf{U}_{2}^{\star},\mathbf{U}_{3}^{\star} \rrbracket$ is constructed by the following two steps:
\begin{enumerate}[label=(\arabic*)]
	\item $\mathcal{S}^{\star}=\lambda\mathcal{S}/\min\left\{\sigma_{r}\left(\mathcal{M}_{k}
	\left(\mathcal{S}\right)\right): k\in[3]\right\}$ where each entry of $\mathcal{S}$ is drawn from $\mathcal{N}(0,1)$ and $\lambda$ is a specified constant.
	\item For each $k\in [3]$, $\mathbf{U}_{k}^{\star}$ is top $r$ eigenvectors of $p$-dimensional sample covariance matrix from 100 i.i.d. standard Gaussian random vectors. Tensor covariates $\{\mathcal{X}_{i}\}_{i=1}^{n}$ have i.i.d. entries which are drawn from $\mathcal{N}(0,1)$.
\end{enumerate}

We compare Algorithm \ref{Al1} with the algorithm of \cite{Han2022} and abbreviate them as RGD and GD, respectively. the results are shown in Figure \ref{fig3} and Table \ref{tab1}. The experimental results are based on 200 independent repetitions. From Figure \ref{fig3} with error bars, two algorithms possess similar statistical effects after 300 iterations, when the random noise follows standard normal distribution. However, when the noise obeys a scaled Student's $t$-distribution with $2.1$ degrees of freedom, RGD performs significantly better than GD. Table \ref{tab1} presents parameter values in the experiments and shows that the initial value  $\widetilde{\mathcal{A}}$ via using the truncation method outperforms the original estimator in terms of mean and standard deviation. There is a certain probability that the algorithm GD will fail to converge in 200 independently repeated experiments. The results in Figure \ref{fig3} are calculated after eliminating these non-converging data, which show that our algorithm is more robust.

\begin{figure}[htbp]
    \centering
    \begin{minipage}[t]{0.48\linewidth}
        \centering
        \begin{tikzpicture}
            \begin{axis}[
                width=\linewidth, 
                title={$(a): r=3$, $p=20$, $n=1000$, $\mathcal{N}(0,1)$ noise},
                xlabel = $T$,
                ylabel = {$\text{error}$}, 
                title style={font=\small},
                legend pos=north east, 
                legend columns=1,
                font=\tiny,
                height=6cm, 
                xtick={1,100,200,300,400,500},
                ytick={0,2.619329,5,10,15},
                ymajorgrids=true
            ]
                \addplot[color=blue,mark=+, error bars/.cd, y dir=both, y explicit] 
                coordinates {
                    (1,9.792853)+-(0,1.850605)(11,7.682936)+-(0,1.338879)(21,6.770298)+-(0,1.428238) (31,6.188168)+-(0,1.542553) (41,5.762978)+-(0,1.643557) (51,5.430366)+-(0,1.727951)
                    (61,5.159971)+-(0,1.796524)(71,4.934913)+-(0,1.850961)  (81,4.744587)+-(0,1.893123)  (91,4.581659)+-(0,1.924731) (101,4.440683)+-(0,1.947281)  (111,4.317421)+-(0,1.962063)  (121,4.208507)+-(0,1.970216)  (131,4.111248)+-(0,1.972789)
                    (141,4.023509)+-(0,1.970803)  (151,3.943634)+-(0,1.965296)  (161,3.870371)+-(0,1.957312)  (171,3.802790)+-(0,1.947797) (181,3.740172)+-(0,1.937459)  (191,3.681905)+-(0,1.926705)  (201,3.627426)+-(0,1.915710)(211,3.576212)+-(0,1.904527) (221,3.527797)+-(0,1.893175)  (231,3.481777)+-(0,1.881676)  (241,3.437818)+-(0,1.870060)  (251,3.395644)+-(0,1.858362)  (261,3.355027)+-(0,1.846615)  (271,3.315781)+-(0,1.834849) (281,3.277753)+-(0,1.823083)  (291,3.240814)+-(0,1.811333)  (301,3.204858)+-(0,1.799606)  (311,3.169795)+-(0,1.787906)  (321,3.135551)+-(0,1.776232)  (331,3.102062)+-(0,1.764582)  (341,3.069275)+-(0,1.752951)(351,3.037145)+-(0,1.741335)  (361,3.005635)+-(0,1.729729)  (371,2.974712)+-(0,1.718129)  (381,2.944350)+-(0,1.706533)  (391,2.914526)+-(0,1.694940)(401,2.885218)+-(0,1.683351)
                    (411,2.856408)+-(0,1.671767)  (421,2.828081)+-(0,1.660190)  (431,2.800218)+-(0,1.648621)  (441,2.772806)+-(0,1.637061)(451,2.745827)+-(0,1.625507)  (461,2.719264)+-(0,1.613957)  (471,2.693100)+-(0,1.602405)  (481,2.667319)+-(0,1.590847)  (491,2.641904)+-(0,1.579278)(500,2.619329)+-(0,1.568855) 
                };
                \addplot[color=red,mark=x,error bars/.cd,y dir=both, y explicit] 
                coordinates {
                    (1,10.705529)+-(0,3.512661)(11,9.142059)+-(0,2.617296)(21,8.159941)+-(0,2.364309) (31,7.452470)+-(0,2.269923) (41,6.914680)+-(0,2.250241) (51,6.490852)+-(0,2.268852)
                    (61,6.146242)+-(0,2.304229)(71,5.858635)+-(0,2.342916)  (81,5.613665)+-(0,2.377725)  (91,5.401523)+-(0,2.405485) (101,5.215460)+-(0,2.425458)  (111,5.050579)+-(0,2.438023)  (121,4.903109)+-(0,2.443932)  (131,4.770059)+-(0,2.444076)
                    (141,4.649012)+-(0,2.439337)  (151,4.537945)+-(0,2.430481)  (161,4.435225)+-(0,2.418182)  (171,4.339537)+-(0,2.403067) (181,4.249843)+-(0,2.385681)  (191,4.165314)+-(0,2.366517)  (201,4.085264)+-(0,2.345988)(211,4.009136)+-(0,2.324433) (221,3.936451)+-(0,2.302123)  (231,3.866825)+-(0,2.279317)  (241,3.799951)+-(0,2.256272)  (251,3.735598)+-(0,2.233254)  (261,3.673579)+-(0,2.210495)  (271,3.613732)+-(0,2.188154) (281,3.555908)+-(0,2.166333)  (291,3.499994)+-(0,2.145114)  (301,3.445869)+-(0,2.124522)  (311,3.393424)+-(0,2.104564)  (321,3.342560)+-(0,2.085220)  (331,3.293187)+-(0,2.066444)  (341,3.245226)+-(0,2.048190)(351,3.198608)+-(0,2.030418)  (361,3.153249)+-(0,2.013054)  (371,3.109076)+-(0,1.996035)  (381,3.066031)+-(0,1.979324)  (391,3.024054)+-(0,1.962875)(401,2.983082)+-(0,1.946628)
                    (411,2.943060)+-(0,1.930557)  (421,2.903950)+-(0,1.914655)  (431,2.865708)+-(0,1.898911)  (441,2.828292)+-(0,1.883306)(451,2.791673)+-(0,1.867855)  (461,2.755824)+-(0,1.852575)  (471,2.720713)+-(0,1.837468)  (481,2.686321)+-(0,1.822549)  (491,2.652646)+-(0,1.807866)(500,2.622941)+-(0,1.794864) 
                };
                \legend{GD ,RGD}
            \end{axis}
        \end{tikzpicture}
    \end{minipage}%
    \hfill
    \begin{minipage}[t]{0.48\linewidth}
        \centering
        \begin{tikzpicture}
            \begin{axis}[
                width=\linewidth,
                title={$(b): r=3$, $p=15$, $n=1000$, $10t_{2.1}$ noise},
                xlabel = $T$,
                ylabel = {$\text{error}$}, 
                title style={font=\small},
                legend pos=north west, 
                legend columns=2,
                font=\tiny,
                height=6cm, 
                xtick={1,100,200,300,400},
                ytick={5.54,6.68,7.82,10.78,14.83381,18.89},
                ymajorgrids=true
            ]
                \addplot[color=blue,mark=+,densely dotted,error bars/.cd, y dir=both, y explicit] 
                coordinates { 
                    (1,13.96526)+-(0,1.722473)(11,13.92646)+-(0,2.723122)(21,14.14666)+-(0,3.052672) (31,14.29972)+-(0,3.232899) (41,14.40810)+-(0,3.352288) (51,14.48597)+-(0,3.435386)
                    (61,14.54212)+-(0,3.493755)(71,14.58323)+-(0,3.536220)  (81,14.61435)+-(0,3.569478)  (91,14.63853)+-(0,3.597299) (101,14.65772)+-(0,3.621515)  (111,14.67332)+-(0,3.643259)  (121,14.68638)+-(0,3.663597)  (131,14.69772)+-(0,3.683794)
                    (141,14.70816)+-(0,3.705068)  (151,14.71826)+-(0,3.727490)  (161,14.72792)+-(0,3.749580)  (171,14.73691)+-(0,3.770190) (181,14.74518)+-(0,3.789282)  (191,14.75273)+-(0,3.807143)  (201,14.75958)+-(0,3.823999)(211,14.76573)+-(0,3.840017) (221,14.77122)+-(0,3.855353)  (231,14.77607)+-(0,3.870154)  (241,14.78035)+-(0,3.884542)  (251,14.78415)+-(0,3.898599)  (261,14.78755)+-(0,3.912356)  (271,14.79064)+-(0,3.925796) (281,14.79352)+-(0,3.938854)  (291,14.79623)+-(0,3.951435)  (301,14.79885)+-(0,3.963435)  (311,14.80139)+-(0,3.974778)  (321,14.80391)+-(0,3.985429)  (331,14.80646)+-(0,3.995396)  (341,14.80907)+-(0,4.004710)(351,14.81176)+-(0,4.013407)  (361,14.81453)+-(0,4.021522)  (371,14.81736)+-(0,4.029093)  (381,14.82020)+-(0,4.036155)(391,14.82299)+-(0,4.042731)(400,14.83381)+-(0,4.057523) 
                };
                \addplot[color=red,mark=x,error bars/.cd,y dir=both, y explicit] 
                coordinates {
                    (1,11.089125)+-(0,1.323491)(11,9.328351)+-(0,1.155523)(21,8.787697)+-(0,1.235437) (31,8.498729)+-(0,1.290381) (41,8.308991)+-(0,1.326248) (51,8.168557)+-(0,1.348233)
                    (61,8.055765)+-(0,1.359627)(71,7.959818)+-(0,1.363093)  (81,7.875121)+-(0,1.360944)  (91,7.798596)+-(0,1.354897) (101,7.728116)+-(0,1.346278)  (111,7.662274)+-(0,1.336029)  (121,7.600192)+-(0,1.324940)  (131,7.541354)+-(0,1.313663)
                    (141,7.485418)+-(0,1.302690)  (151,7.432187)+-(0,1.292371)  (161,7.381536)+-(0,1.282905)  (171,7.333290)+-(0,1.274326) (181,7.287343)+-(0,1.266599)  (191,7.243536)+-(0,1.259590)  (201,7.201736)+-(0,1.253201)(211,7.161879)+-(0,1.247367) (221,7.123955)+-(0,1.242015)  (231,7.087851)+-(0,1.237022)  (241,7.053486)+-(0,1.232280)  (251,7.020796)+-(0,1.227652)  (261,6.989719)+-(0,1.223004)  (271,6.960173)+-(0,1.218225) (281,6.932083)+-(0,1.213226)  (291,6.905359)+-(0,1.207949)  (301,6.879908)+-(0,1.202409)  (311,6.855613)+-(0,1.196568)  (321,6.832371)+-(0,1.190460)  (331,6.810135)+-(0,1.184144)  (341,6.788831)+-(0,1.177642)(351,6.768409)+-(0,1.171010)  (361,6.748830)+-(0,1.164295)  (371,6.730061)+-(0,1.157524)  (381,6.712097)+-(0,1.150742)  (391,6.694888)+-(0,1.143943)(400,6.680003)+-(0,1.137797) 
                };
                \legend{GD ,RGD}
            \end{axis}
        \end{tikzpicture}
    \end{minipage}

    \vspace{0.5cm}
    
    \begin{minipage}[t]{0.48\linewidth}
        \centering
        \begin{tikzpicture}
            \begin{axis}[
                width=\linewidth, 
                title={$(c): r=3$, $p=20$, $n=2000$, $10t_{2.1}$ noise},
                xlabel = $T$,
                ylabel = {$\text{error}$}, 
                title style={font=\small},
                legend pos=south west, 
                legend columns=2,
                font=\tiny,
                height=6cm,
                xtick={1,50,100,150,200},
                ytick={5.13,6.268375,7.41,12.06885,16.36},
                ymajorgrids=true
            ]
                \addplot[color=blue,mark=+,densely dotted,error bars/.cd, y dir=both, y explicit] 
                coordinates { 
                    (1,13.51766)+-(0,2.162370)(6,12.66623)+-(0,2.416605)(11,12.40324)+-(0,2.713228) (16,12.29167)+-(0,2.947686) (21,12.23398)+-(0,3.127623) (26,12.20162)+-(0,3.267202)(31,12.18199)+-(0,3.379522)
                    (36,12.16986)+-(0,3.472255)  (41,12.16204)+-(0,3.550679)  (46,12.15696)+-(0,3.618178) (51,12.15354)+-(0,3.677250)  (56,12.15118)+-(0,3.729605)  (61,12.14940)+-(0,3.776471)  (66,12.14791)+-(0,3.818692)
                    (71,12.14649)+-(0,3.856892)  (76,12.14500)+-(0,3.891554)  (81,12.14333)+-(0,3.923098)  (86,12.14145)+-(0,3.951897) (91,12.13933)+-(0,3.978296)  (96,12.13697)+-(0,4.002611)  (101,12.134417)+-(0,4.025123)
                    (106,12.13165)+-(0,4.046081)  (111,12.12872)+-(0,4.065700)  (116,12.12565)+-(0,4.084164)  (121,12.12247)+-(0,4.101629)  (126,12.11920)+-(0,4.118226)  (131,12.11586)+-(0,4.134063)  (136,12.11247)+-(0,4.149231) (141,12.10905)+-(0,4.163801)  (146,12.10561)+-(0,4.177831)  (151,12.10216)+-(0,4.191368)  (156,12.09871)+-(0,4.204449)  (161,12.09527)+-(0,4.217099)  (166,12.09184)+-(0,4.229342)  (171,12.08841)+-(0,4.241195)(176,12.08501)+-(0,4.252670)  (181,12.08161)+-(0,4.263780)  (186,12.07823)+-(0,4.274535)  (191,12.07487)+-(0,4.284947)  (196,12.07152)+-(0,4.295025)(200,12.06885)+-(0,4.302856) 
                };
                \addplot[color=red,mark=x, error bars/.cd, y dir=both, y explicit] 
                coordinates {
                    (1,10.507189)+-(0,1.4383435)(6,9.609258)+-(0,0.9888349)(11,9.077971)+-(0,0.9106323) (16,8.705577)+-(0,0.9077156) (21,8.423123)+-(0,0.9245530) (26,8.198401)+-(0,0.9464223)(31,8.013433)+-(0,0.9685479)
                    (36,7.857195)+-(0,0.9893248)  (41,7.722519)+-(0,1.0082342)  (46,7.604526)+-(0,1.0252103) (51,7.499783)+-(0,1.0403466)  (56,7.405812)+-(0,1.0538196)  (61,7.320766)+-(0,1.0657583)  (66,7.243215)+-(0,1.0762870)
                    (71,7.172042)+-(0,1.0855207)  (76,7.106366)+-(0,1.0935869)  (81,7.045476)+-(0,1.1006011)  (86,6.988793)+-(0,1.1066657) (91,6.935828)+-(0,1.1118735)  (96,6.886173)+-(0,1.1163115)  (101,6.839487)+-(0,1.1200581)
                    (106,6.795485)+-(0,1.1232023)  (111,6.753914)+-(0,1.1258108)  (116,6.714559)+-(0,1.1279595)  (121,6.677230)+-(0,1.1297026)  (126,6.641761)+-(0,1.1310957)  (131,6.607996)+-(0,1.1321838)  (136,6.575810)+-(0,1.1330216) (141,6.545074)+-(0,1.1336267)  (146,6.515706)+-(0,1.1340588)  (151,6.487581)+-(0,1.1343093)  (156,6.460664)+-(0,1.1344614)  (161,6.434790)+-(0,1.1344353)  (166,6.410038)+-(0,1.1344107)  (171,6.386104)+-(0,1.1341319)(176,6.363349)+-(0,1.1340671)  (181,6.341054)+-(0,1.1334974)  (186,6.320397)+-(0,1.1337722)  (191,6.299584)+-(0,1.1329503)  (196,6.282076)+-(0,1.1353883)(200,6.268375)+-(0,1.1376101) 
                };
                \legend{GD ,RGD}
            \end{axis}
        \end{tikzpicture}
    \end{minipage}%
    \hfill
    \begin{minipage}[t]{0.48\linewidth}
        \centering
        \begin{tikzpicture}
            \begin{axis}[
                width=\linewidth, 
                title={$(d): r=5$, $p=30$, $n=3000$, $6t_{2.1}$ noise},
                xlabel = $T$,
                ylabel = {$\text{error}$}, 
                title style={font=\small},
                font=\tiny,
                height=6cm,
                xtick={1,50,100,150,200},
                ytick={7.62,8.861090,10.1,12.14232,15.6},
                ymajorgrids=true
            ]
                \addplot[color=blue,mark=+,densely dotted,
                error bars/.cd,
                y dir=both, y explicit] 
                coordinates { 
                    (1,15.94007)+-(0,1.776514)(6,14.98285)+-(0,1.855803)(11,14.48366)+-(0,1.988271) (16,14.13986)+-(0,2.125690) (21,13.88296)+-(0,2.251740) (26,13.68159)+-(0,2.363678)(31,13.51809)+-(0,2.462469)
                    (36,13.38169)+-(0,2.549787)  (41,13.26552)+-(0,2.627253)  (46,13.16501)+-(0,2.696278) (51,13.07699)+-(0,2.758057)  (56,12.99913)+-(0,2.813601)  (61,12.92967)+-(0,2.863766)  (66,12.86722)+-(0,2.909280)
                    (71,12.81071)+-(0,2.950763)  (76,12.75926)+-(0,2.988743)  (81,12.71217)+-(0,3.023676)  (86,12.66886)+-(0,3.055950) (91,12.62885)+-(0,3.085895)  (96,12.59174)+-(0,3.113796)  (101,12.55719)+-(0,3.139891)
                    (106,12.52491)+-(0,3.164384)  (111,12.49466)+-(0,3.187449)  (116,12.46622)+-(0,3.209233)  (121,12.43941)+-(0,3.229862)  (126,12.41406)+-(0,3.249444)  (131,12.39003)+-(0,3.268074)  (136,12.36720)+-(0,3.285835) (141,12.34544)+-(0,3.302800)  (146,12.32466)+-(0,3.319033)  (151,12.30475)+-(0,3.334593)  (156,12.28564)+-(0,3.349533)  (161,12.26726)+-(0,3.363900)  (166,12.24952)+-(0,3.377739)  (171,12.23239)+-(0,3.391088)(176,12.21579)+-(0,3.403985)  (181,12.19969)+-(0,3.416461)  (186,12.18404)+-(0,3.428547)  (191,12.16880)+-(0,3.440270)  (196,12.15395)+-(0,3.451656)(200,12.14232)+-(0,3.460536) 
                };
                \addplot[color=red,mark=x,
                error bars/.cd,
                y dir=both, y explicit] 
                coordinates { 
                    (1,13.641545)+-(0,1.174126)(6,13.194345)+-(0,1.134484)(11,12.843389)+-(0,1.098019) (16,12.543982)+-(0,1.066696) (21,12.278493)+-(0,1.042027) (26,12.038673)+-(0,1.024083)(31,11.819909)+-(0,1.012194)
                    (36,11.619072)+-(0,1.005433)  (41,11.433822)+-(0,1.002827)  (46,11.262253)+-(0,1.003506) (51,11.102791)+-(0,1.006733)  (56,10.954115)+-(0,1.011896)  (61,10.815086)+-(0,1.018514)  (66,10.684747)+-(0,1.026224)
                    (71,10.562274)+-(0,1.034742)  (76,10.446956)+-(0,1.043852)  (81,10.338183)+-(0,1.053358)  (86,10.235414)+-(0,1.063103) (91,10.138155)+-(0,1.072964)  (96,10.045960)+-(0,1.082849)  (101,9.958435)+-(0,1.092679)
                    (106,9.875227)+-(0,1.102394)  (111,9.796020)+-(0,1.111947)  (116,9.720536)+-(0,1.121303)  (121,9.648518)+-(0,1.130430)  (126,9.579729)+-(0,1.139304)  (131,9.513950)+-(0,1.147914)  (136,9.450983)+-(0,1.156250) (141,9.390647)+-(0,1.164305)  (146,9.332773)+-(0,1.172080)  (151,9.277205)+-(0,1.179575)  (156,9.223801)+-(0,1.186791)  (161,9.172434)+-(0,1.193738)  (166,9.122986)+-(0,1.200419)  (171,9.075349)+-(0,1.206840)(176,9.029424)+-(0,1.213006)  (181,8.985121)+-(0,1.218923)  (186,8.942352)+-(0,1.224600)  (191,8.901034)+-(0,1.230042)  (196,8.861090)+-(0,1.235258)(200,8.861090)+-(0,1.239271) 
                };
                \legend{GD ,RGD}
            \end{axis}
        \end{tikzpicture}
    \end{minipage}

    \caption{Comparison of statistical performance between RGD and GD. $T$ and the $\text{error}$ represent the number of iterations and $\left\|\mathcal{A}^{(T)}-\mathcal{A}^{\star}\right\|_{F}$ respectively.}
    \label{fig3}
\end{figure}

\begin{table}[htb]
	\centering
	\caption{Parameter values and initial values in Algorithm \ref{Al1} for Figure \ref{fig3} and the statistical performance of the initializer $\widetilde{\mathcal{A}}$.}
	\label{tab1}
	\resizebox{\textwidth}{!}{
	\begin{tabular}{cccccccc}
		\toprule
		Case & $\lambda$ & $\eta$ & $(a,b)$ & $(\tau,\varpi)$ & Initializer $\widetilde{\mathcal{A}}$ & Failures & Algorithm \\
		\midrule
		\multirow{2}{*}{(a)} & \multirow{2}{*}{5} & \multirow{2}{*}{$1\times 10^{-3}$} & \multirow{2}{*}{$(5,1)$} & $(10,3)\sqrt{n/df}$ & $40.04_{(5.18)}$ & 0 & RGD \\
		& & & & $+\infty$ & $48.92_{(10.53)}$ & 3 & GD \\
		\cmidrule(lr){1-8}
		\multirow{2}{*}{(b)} & \multirow{2}{*}{5} & \multirow{2}{*}{$2\times 10^{-3}$} & \multirow{2}{*}{$(5,1)$} & $(10,5)\sqrt{n/df}$ & $31.84_{(1.93)}$ & 0 & RGD \\
		& & & & $+\infty$ & $55.06_{(7.49)}$ & 14 & GD \\
		\cmidrule(lr){1-8}
		\multirow{2}{*}{(c)} & \multirow{2}{*}{5} & \multirow{2}{*}{$1\times 10^{-3}$} & \multirow{2}{*}{$(5,1)$} & $(15,5)\sqrt{n/df}$ & $46.47_{(5.65)}$ & 0 & RGD \\
		& & & & $+\infty$ & $64.47_{(14.24)}$ & 4 & GD \\
		\cmidrule(lr){1-8}
		\multirow{2}{*}{(d)} & \multirow{2}{*}{8} & \multirow{2}{*}{$8\times 10^{-4}$} & \multirow{2}{*}{$(5,1)$} & $(20,8)\sqrt{n/df}$ & $61.30_{(3.94)}$ & 0 & RGD \\
		& & & & $+\infty$ & $78.83_{(19.01)}$ & 0 & GD \\
		\bottomrule
	\end{tabular}
	}
\end{table}
In the above simulations we considered homogeneous model, i.e., the error distribution is independent of the covariates. In the next experiments, we consider evaluating the performance of RGD under the heteroscedastic model:
\begin{align*}
	y_i=\left\langle\mathcal{X}_{i}, \mathcal{A}^{*}\right\rangle+5c^{-1}\left(\left\langle\mathcal{X}_{i}, \mathcal{A}^{*}\right\rangle\right)^2 \varepsilon_i,
\end{align*}
where the constant $c=\sqrt{3}\left\|\mathcal{A}^{*}\right\|_F^2$ such that $\mathbb{E}\left[c^{-2}\left(\left\langle\mathcal{X}_{i}, \mathcal{A}^{*}\right\rangle\right)^4\right]=1$. In order to model the various shapes of the error distribution, we consider the following three scenarios: (a) $t_{2.1}$ distribution; (b) Pareto distribution; (c) LogNormal distribution. The composition of the tensor parameter $\mathcal{A}^{*}$ is the same as the case of the homogeneous model and considers $r=3$ and $p=13$. We choose $\tau=10\sqrt{n/df}, \varpi=5\sqrt{n/df}$ and $\eta=10^{-1}$. Based on 200 independently repeated experiments, Tables \ref{tab2} and \ref{tab3} show that our method outperforms the least squares method for various heavy-tailed errors, including asymmetric ones.

\begin{table}[htb]
	\centering
	\caption{Simulation results under the heteroskedasticity model. Standard deviations are shown in parentheses.}
	\label{tab2}
	\fontsize{9}{13}\selectfont
	\begin{tabular}{@{}llcccc@{}}
		\toprule
		\multirow{2}{*}{Case} & \multirow{2}{*}{Method} & \multicolumn{4}{c}{$n$} \\
		\cmidrule(lr){3-6}
		     &        & 500 & 1000 & 1500 & 3000 \\
		\midrule
		\multirow{2}{*}{(a)} 
		& RGD & 5.6289 (1.762) & 2.5250 (0.617) & 1.9901 (0.471) & 1.4469 (0.126) \\
		& GD  & 10.5183 (5.459) & 5.8554 (3.227) & 4.3039 (1.805) & 2.9340 (0.830) \\
		\midrule
		\multirow{2}{*}{(b)} 
		& RGD & 4.7553 (2.107) & 2.1107 (0.767) & 1.4920 (0.294) & 1.0559 (0.088) \\
		& GD  & 7.8253 (5.238) & 4.6653 (3.478) & 3.6750 (2.737) & 2.2123 (1.162) \\
		\midrule
		\multirow{2}{*}{(c)} 
		& RGD & 5.3948 (1.812) & 2.5135 (0.653) & 1.8884 (0.332) & 1.3708 (0.127) \\
		& GD  & 9.0962 (4.277) & 4.7237 (2.025) & 3.6291 (1.178) & 2.4001 (0.475) \\
		\bottomrule
	\end{tabular}
\end{table}

\begin{table}[htb]
	\centering
	\caption{The number of times the algorithm failed to converge in the experiments of Table \ref{tab2}.}
	\label{tab3}
	\fontsize{9}{11}\selectfont
	\begin{tabular}{@{}llcccc@{}}
		\toprule
		\multirow{2}{*}{Case} & \multirow{2}{*}{Method} & \multicolumn{4}{c}{$n$} \\
		\cmidrule(lr){3-6}
		     &        & 500 & 1000 & 1500 & 3000 \\
		\midrule
		\multirow{2}{*}{(a)} 
		& RGD & 0 & 0 & 0 & 0 \\
		& GD  & 17 & 10 & 12 & 10 \\
		\midrule
		\multirow{2}{*}{(b)} 
		& RGD & 0 & 0 & 0 & 0 \\
		& GD  & 11 & 3 & 11 & 8 \\
		\midrule
		\multirow{2}{*}{(c)} 
		& RGD & 0 & 0 & 0 & 0 \\
		& GD  & 10 & 10 & 10 & 9 \\
		\bottomrule
	\end{tabular}
\end{table}

\subsection{Phase transition phenomenon}
According to Remark \ref{remark4}, the resulting estimator has the theoretical rate of order  $C(M_{\delta},\kappa)\times\left(\frac{df}{n}\right)^{\min\left\{\frac{1}{2},\frac{\delta}{1+\delta}\right\}}$  under Frobenius norm, where $C(M_{\delta},\kappa)$ is a positive constant depending only on $M_{\delta}$ and $\kappa$. This means that when $\delta<1$, the statistical error will increase sharply. To verify this phase transition phenomenon, we choose scaled Student's $t_{\nu}$-distributions with degrees of freedom $\nu\in \{1.01,1.1,1.2,\ldots,2.9,3\}$ as the error distribution. Take $\delta=\nu-1.01$ in the selection of truncation and robustification parameters when $\nu\leq 2$, otherwise $\delta=1$. The statistical behavior of the adaptive Huber estimator is shown in Figure \ref{fig4}. For simplicity, the constant $C(M_{\delta},\kappa)$ in the theoretical bound is set as the fixed constant 13.4 for (a) and 13.2 for (b). The empirical curves in Figure \ref{fig4} match the theoretical curves very well when $\delta\leq 1$. When $\delta> 1$, the statistical error still decreases gradually as $\delta$ increases, which is consistent with the theory and intuitive expectation. Specifically, it is general knowledge that $t_{\nu}\to \mathcal{N}(0,1)$ as $\nu\to \infty$ and $M=\mathbb{E}[t_{\nu}^2]=\frac{\nu}{\nu-2}\stackrel{\nu\to \infty}{\searrow} 1$. When the order of the moment grows, $C(M,\kappa)$ will drop and the tail of the noise distribution is lighter. Therefore, the proposed estimator achieves a better statistical performance as expected than the case of the relatively small $\delta$. 
\begin{figure}[htbp]
    \centering
    \begin{minipage}[t]{0.48\textwidth}
        \centering
        \begin{tikzpicture}
            \begin{axis}[
                width=\linewidth, 
                title={$(a): r=3, p=10, n=300, 5t_{\nu}$ noise},
                legend columns=1,
                title style={font=\small},
                xlabel = $\delta$,
                ylabel = {$\|\mathcal{A}^{(T_{\max})}-\mathcal{A}^{\star}\|_{F}$},
                legend style={at={(0.98,0.98)}, anchor=north east, font=\tiny}, 
                xtick={0,0.5,1,1.5,2},
                font=\footnotesize,
                height=6.5cm, 
                ymajorgrids=true
            ]
                \addplot[smooth,color=red,mark=o]
                coordinates {
                    (0,13.6633)(0.1-0.01,12.47206)(0.2-0.01,11.55077)(0.3-0.01,10.99128)(0.4-0.01,10.46461)(0.5-0.01,9.998508)(0.6-0.01,9.619845)(0.7-0.01,9.059196)(0.8-0.01,8.756197)(0.9-0.01,8.720594
                    )(1-0.01,8.390624)(1.1-0.01,8.363561)(1.2-0.01,8.317641)(1.3-0.01,8.011802)(1.4-0.01,7.970411)(1.5-0.01,7.897793)(1.6-0.01,7.874986)(1.7-0.01,7.99651)(1.8-0.01,7.884036)(1.9-0.01,7.978035)(2-0.01,7.966126) 
                };
                \addplot [domain=-0.03:2, samples=300, color=blue]
                {13.4*((27+3*10*3)/300)^(min(0.5,x/(1+x)))};
                \legend{RGD,Theoretical Bound}
            \end{axis}
        \end{tikzpicture}
    \end{minipage}%
    \hfill
    \begin{minipage}[t]{0.48\textwidth}
        \centering
        \begin{tikzpicture}
            \begin{axis}[
                width=\linewidth,
                title={$(b): r=3, p=15, n=500, 5t_{\nu}$ noise},
                legend columns=1,
                title style={font=\small},
                xlabel = $\delta$,
                ylabel = {$\|\mathcal{A}^{(T_{\max})}-\mathcal{A}^{\star}\|_{F}$},
                legend style={at={(0.98,0.98)}, anchor=north east, font=\tiny},
                xtick={0,0.5,1,1.5,2},
                ytick={6.8,8,10,12,14},
                font=\footnotesize,
                height=6.5cm, 
                ymajorgrids=true
            ]
                \addplot[smooth,color=red,mark=o]
                coordinates {
                    (0,13.60921)(0.1-0.01,12.31269)(0.2-0.01,11.08395)(0.3-0.01,10.09575)(0.4-0.01,9.712392)(0.5-0.01,9.14836)(0.6-0.01,8.83104)(0.7-0.01,8.613623)(0.8-0.01,8.132673)(0.9-0.01,7.71228)(1-0.01,7.517182)(1.1-0.01,7.401869)(1.2-0.01,7.333676)(1.3-0.01,7.326894)(1.4-0.01,7.353094)(1.5-0.01,7.074537)(1.6-0.01,7.16725)(1.7-0.01,6.931935)(1.8-0.01,7.011848)(1.9-0.01,6.703529)(2-0.01,6.927234) 
                };
                \addplot [domain=-0.03:2, samples=300, color=blue]
                {13.2*((27+3*15*3)/500)^(min(0.5,x/(1+x)))};
                \legend{RGD,Theoretical Bound}
            \end{axis}
        \end{tikzpicture}
    \end{minipage}
    
    \caption{The trend of statistical errors with varying $\delta$ where $T_{\max}=500$.}
    \label{fig4}
\end{figure}

\subsection{Image recovery}
	Since a two-dimensional image can be considered as a matrix, one approach to image compression is to approximate it by singular value decomposition using a low-rank matrix \citep[see][]{Recht2010,Wakin2006}. In this section, we show the application of our algorithm in image restoration, which is used to verify the feasibility and effectiveness of the proposed method. Firstly, we consider the matrix version of (\ref{eq}), i.e., the matrix compressed sensing model:
\begin{align*}
	y_i=\left\langle X_{i}, A^{*}\right\rangle+\varepsilon_i,
\end{align*}
where $X_{i}\in \mathbb{R}^{d_{1}\times d_{2}}$ are covariates, $A^{*}\in \mathbb{R}^{d_{1}\times d_{2}}$ is a low-rank matrix parameter and $\text{rank}(A^{*})=r$. Write $A^{*}$ in the form of singular value decomposition denoted as $A^{*}=U^{*}S^{*}V^{*\top}$ and obtain the low-rank estimate of $A^{*}$ by solving the following optimization problem:
\begin{align*}
	\left(\widehat{S}, \widehat{U}, \widehat{V}\right)=\underset{S, U, V}{\arg \min }\left\{\mathcal{L}_\varpi\left(U S V^{\top}\right)+\frac{a}{2}\left(\left\|U^{\top} U-I_r\right\|_F^2+\left\|V^{\top} V-I_r\right\|_F^2\right)\right\}.
\end{align*}
By the matrix version of Algorithm \ref{Al1} and Theorem \ref{theorem1}, it can be obtained that when $n \gtrsim R^2 \bar{p} r$, after enough iterations $T_{\max}$, we have with high probability
\begin{align*}
	\left\|A^{\left(T_{\max }\right)}-A^{*}\right\|_F \lesssim \kappa^2 \sqrt{M \bar{p} \bar{r} / n},
\end{align*}
where $R:=\mathbb{E}\left[y_i^2\right], \kappa:=\left\|A^{*}\right\|_{\mathrm{op}} / \sigma_r\left(A^{*}\right), \bar{p}:=d_1 \vee d_2,\bar{r}:=r$ and $ M:=\sqrt[k]{\mathbb{E}\left(\mathbb{E}\left(\varepsilon_i^2 \mid X_i\right)^k\right)}<\infty$. This is the same as error upper bound established by \cite{Negahban2011} in sub-Gaussian noise. Next, we use each of three $43\times 53$ dimensional 0-1 matrices (Figure 1 of \cite{Kong2020}) as the parameter matrices of (\ref{eq}). As shown in the first row of images in Figure \ref{fig5}, we denote them as $\{A^{*}_{i}\}_{i\in [3]}$  respectively. Each term of the covariates is chosen to be i.i.d. samples from $\mathcal{N}(0,1)$. For the random error term, we consider three levels of scaled Student's $t$-distributions. Specifically, the noise is generated as $\varepsilon_i = C \cdot T$, where $T \sim t_{2.1}$ is a random variable following a Student's t-distribution with 2.1 degrees of freedom. The scaling factor $C$ is set to 1, 2, and 4, respectively, in each of the three experiments.

Table \ref{tab4} shows the simulation results based on 200 repeated experiments, demonstrating that the proposed robust estimator has better statistical performance than the benchmark in terms of mean and standard deviation. We randomly select a dataset from the 200 experiments and plot the estimators of $\{A^{*}_{i}\}_{i\in [3]}$. These images are shown in Figure \ref{fig5}, where the images in the second row are the images reconstructed by the estimator proposed in this paper; The third row is the image reconstructed by the original least squares method. Figure \ref{fig5} illustrates that our robust estimator outperforms the conventional least squares estimation.

Secondly, we confirm the superiority of the algorithm under heavy-tailed noise by combining each channel of the four cigarette package images (Golden Dragon, Yellow Crane Tower,Hatamen and Hongtashan), each of which has the resolution of $110\times 70$ into a third-order tensor with appropriately scaling as a parameter of the model (i.e., $p_{1}=110,p_{2}=70,p_{3}=12$) (shown in Figure \ref{fig6}).

Each term of the covariates is chosen to be i.i.d.following $\mathcal{N}(0,1)$, and the random error term $\epsilon_{i}$ is generated as $\varepsilon_i = 5 \cdot T$, where $T \sim t_{2.1}$ (a student's $t$-distribution with 2.1 degrees of freedom). The sample size is $n=25000$. We choose $r_{1}=60,r_{2}=40,r_{3}=12$ as the rank of the recovered tensor and the parameters are selected as $a=5$,$b=1$ and $\varpi=100\sqrt{n/df}$.  The error $\left\|A^{\left(T_{\max }\right)}-A^{*}\right\|_F$ obtained after $T_{\max}=500$ iterations is 15.1514. The recovered parameter images as shown in Figure \ref{fig7}. The results show that Algorithm \ref{Al1} can better recover the original image from its compressed measurements. For the least squares method, the algorithm will not converge.

     \begin{figure}[htbp]
    	\centering
    	\begin{minipage}{0.32\linewidth}
    		\includegraphics[width=\linewidth]{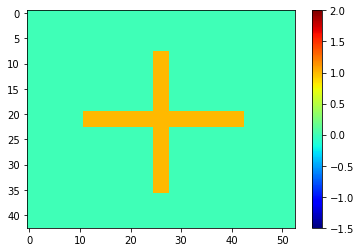}
    	\end{minipage}
    	\hfill
    	\begin{minipage}{0.32\linewidth}
    		\includegraphics[width=\linewidth]{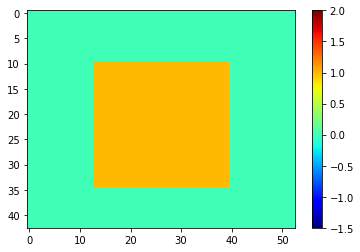}
    	\end{minipage}
    	\hfill
    	\begin{minipage}{0.32\linewidth}
    		\includegraphics[width=\linewidth]{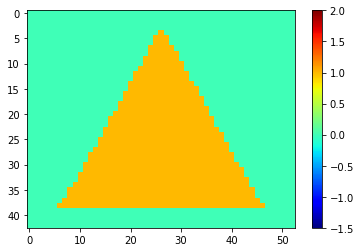}
    	\end{minipage}
    	\centering
    	\begin{minipage}{0.32\linewidth}
    		\includegraphics[width=\linewidth]{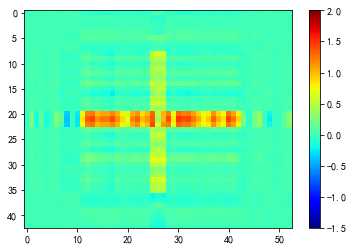}
    	\end{minipage}
    	\hfill
    	\begin{minipage}{0.32\linewidth}
    		\includegraphics[width=\linewidth]{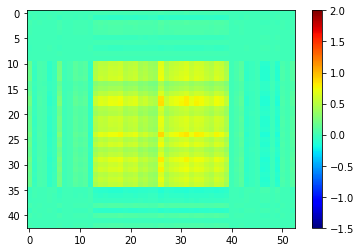}
    	\end{minipage}
    	\hfill
    	\begin{minipage}{0.32\linewidth}
    		\includegraphics[width=\linewidth]{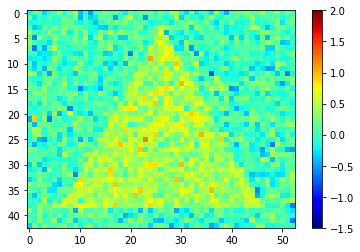}
    	\end{minipage}
    	\centering
    	\begin{minipage}{0.32\linewidth}
    		\includegraphics[width=\linewidth]{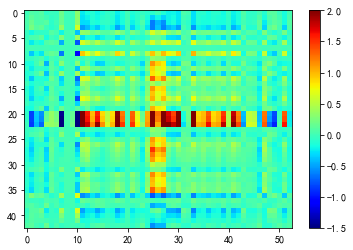}
    	\end{minipage}
    	\hfill
    	\begin{minipage}{0.33\linewidth}
    		\includegraphics[width=\linewidth]{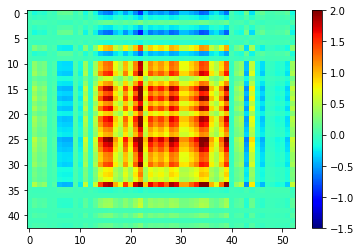}
    	\end{minipage}
    	\hfill
    	\begin{minipage}{0.33\linewidth}
    		\includegraphics[width=\linewidth]{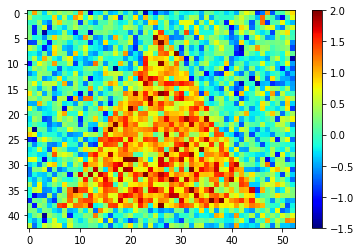}
    	\end{minipage}
    	\setlength{\abovecaptionskip}{0.3cm}
    	\caption{Images constructed by randomly selecting an experimental result from Table \ref{tab4}. }
    	\label{fig5}
    \end{figure}

\begin{table}[htb]
	\centering
	\caption{Comparison of estimation errors for two methods. Standard deviations are shown in parentheses.}
	\label{tab4}
	\fontsize{11}{14}\selectfont
	\begin{tabular}{@{}lccc@{}}
		\toprule
		Method & $A^{*}_{1}$ ($n=5000$) & $A^{*}_{2}$ ($n=10000$) & $A^{*}_{3}$ ($n=10000$) \\
		\midrule
		RGD & $0.6045\;(0.0295)$ & $0.9159\;(0.0127)$ & $0.6647\;(0.0073)$ \\
		GD  & $1.1600\;(0.8077)$ & $1.2499\;(0.1514)$ & $0.8682\;(0.0187)$ \\
		\bottomrule
	\end{tabular}
\end{table}
   
\begin{figure}[htbp]
    	\centering
    	\begin{minipage}{0.22\linewidth}
    		\includegraphics[width=\linewidth]{./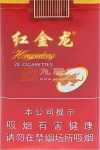}
    	\end{minipage}
    	\hfill
    	\begin{minipage}{0.22\linewidth}
    		\includegraphics[width=\linewidth]{./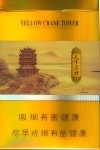}
    	\end{minipage}
    	\hfill
    	\begin{minipage}{0.22\linewidth}
    		\includegraphics[width=\linewidth]{./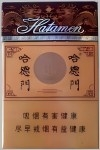}
    	\end{minipage}
    	\hfill
    	\begin{minipage}{0.22\linewidth}
    		\includegraphics[width=\linewidth]{./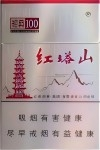}
    	\end{minipage}
    	\setlength{\abovecaptionskip}{0.3cm}
    	\caption{The images of cigarette package.}
    	\label{fig6}
    \end{figure}
    
\begin{figure}[htbp]
	\centering
	\begin{minipage}{0.22\linewidth}
		\includegraphics[width=\linewidth]{./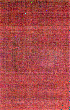}
	\end{minipage}
	\hfill
    \begin{minipage}{0.22\linewidth}
    	\includegraphics[width=\linewidth]{./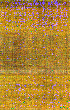}
    \end{minipage}
    \hfill
    \begin{minipage}{0.22\linewidth}
    	\includegraphics[width=\linewidth]{./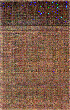}
    \end{minipage}
    \hfill
    \begin{minipage}{0.22\linewidth}
    	\includegraphics[width=\linewidth]{./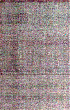}
    \end{minipage}
    \setlength{\abovecaptionskip}{0.3cm}
    \caption{Recovered images.}
    \label{fig7}
\end{figure}

\subsection{An application to Beijing air-quality dataset}
We use the Beijing Air Quality dataset\footnote{The Beijing Air Quality dataset \citep{Du2021} can be accessed through the UCI repository at: \url{https://archive.ics.uci.edu/dataset/501/beijing+multi+site+air+quality+data}.} to predict daily mean of PM2.5 concentrations and verify the superiority of our method. The feature matrix $X_i$ encompasses 24-hourly measurements of eight covariates: the concentrations of $\mathrm{SO_{2}}$, $\mathrm{NO_2}$, CO, and $\mathrm{O_3}$, along with the meteorological variables of temperature, atmospheric pressure, dew point, and wind speed. Thus, $X_i \in \mathbb{R}^{24 \times 8}$. The corresponding response variable $y_i \in \mathbb{R}$ is the daily mean $\mathrm{PM_{2.5}}$ concentration. To apply our tensor regression framework to this problem, we treat the feature matrices $X_i$ as 2nd-order tensors. Consequently, our general model specializes to the matrix regression setting, for which we aim to estimate a low-rank coefficient matrix $A^* \in \mathbb{R}^{24 \times 8}$. The complete dataset is collected from 12 monitoring stations. Each station contributes 1461 such daily observations $(X_i, y_i)$. In preprocessing, missing values in the features were imputed using their respective column means, after which all features were normalized.

We utilized data from the top five stations for testing our algorithm, assigning the first 1,200 samples to the training set and the subsequent 261 samples to the test set. We choose $r=5$ and select robustification parameters $\tau$ and $\varpi$ by 5-fold cross-validation. We use RMSE to measure the performance of our method in test set, which is defined as
\begin{align*}
	\mathrm{RMSE}=\left\{\frac{1}{n_{\text {test}}} \sum_{i=1}^{n_{\text {test}}}\left(y_i-\widehat{y}_i\right)^2\right\}^{1 / 2}.
\end{align*}
The prediction results of our robust method (RGD) are shown in Figure \ref{fig8}. Notably, the standard gradient descent (GD) algorithm failed to converge on this dataset, and its results are therefore omitted. This failure is likely attributable to the presence of outliers or heavy-tailed distributions inherent in the real-world air quality data, which underscores the practical necessity of our robust methodology.

\begin{figure}[htbp]
   	\centering
   	\begin{tikzpicture}
   		\begin{axis}
   			[ybar,
   			grid=major,major grid style={dashed},
   			ymin=0,
   			ymax=0.6,
   			ylabel=RMSE,
   			bar width=.4cm,
   			width=\linewidth,
   			height=6cm,
   			symbolic x coords={Aotizhongxin, Changping, Dingling, Dongsi,Guanyuan},
   			xtick=data,
   			axis y line=left, 
   			axis x line=bottom, 
   			nodes near coords,
   			nodes near coords style={font=\fontsize{8}{12}\selectfont},
   			enlarge x limits=0.15,
   			legend style={at={(0.5,-0.2)},anchor=north,legend columns=-1},
   			] 
   			\addplot+ coordinates {(Aotizhongxin, 0.5532) (Changping, 0.5501) (Dingling,0.5360) (Dongsi, 0.5056) (Guanyuan, 0.55528) }; 
   		\end{axis}
   	\end{tikzpicture}
\caption{The prediction error of Algorithm \ref{Al1}.}
\label{fig8}
\end{figure}

\section{Discussion}\label{discussion}
This paper proposes a robust parameter estimation framework for low-rank tensor regression model in the context of heavy-tailed errors, and theoretically establishes optimal convergence guarantee for only finite second-order moment noise. We also conduct simulations under both homogeneous and heterogeneous models, demonstrating that the proposed robust estimator outperforms traditional methods in both scenarios. Finally, the method is applied to image recovery and Beijing air-quality dataset, yielding significant results. Furthermore, while this study focuses on parameter estimation for third-order tensor regression, our algorithm and results can also be extended to $d$-order low-rank tensor regression models. Specifically, 
\begin{align*}
& \left(\widehat{\mathcal{S}}, \widehat{\mathbf{U}}_1, \widehat{\mathbf{U}}_2, \cdots, \widehat{\mathbf{U}}_d\right) \\
={}& \underset{\mathcal{S}, \mathbf{U}_1, \mathbf{U}_2, \cdots, \mathbf{U}_d}{\arg \min }\left\{\mathcal{L}_{\varpi}\left(\llbracket \mathcal{S} ; \mathbf{U}_1, \mathbf{U}_2, \cdots, \mathbf{U}_d \rrbracket\right)+\frac{a}{2} \sum_{k=1}^d\left\|\mathbf{U}_k^{\top} \mathbf{U}_k-b^2 \mathbf{I}_{r_k}\right\|_F^2\right\}.
\end{align*}

Under conditions of Theorem \ref{theorem1}, by choosing $\varpi\asymp_{K,k}\left(Mn/df\right)^{\frac{1}{2}}$ and $\tau\asymp_{K,k}\left(Rn/df\right)^{\frac{1}{2}}$, after a sufficiently large number of iterations and sample sizes, the robust estimator yielded by Algorithm \ref{Al1} has the error upper bound of 
\begin{align*}
	\left\|\mathcal{A}^{(T_{\max})}-\mathcal{A}^{*}\right\|_{F}\lesssim \kappa^{2}\sqrt{Mdf/n}
\end{align*}
with high probability, where the degrees of freedom are generalized to $df := \prod_{k=1}^d r_k + \sum_{k=1}^d p_k r_k$. The terms $\bar{p}:=\max\{p_{k}:k\in [d]\}$, $\bar{r}:=\max\{r_{k}:k\in [d]\}$ and $\kappa$ are generalized accordingly from the 3rd-order case. On the other hand, our improved algorithm can also be used in the low SNR case (i.e., the case where $\left\|\mathcal{A}^{*}\right\|_F^2/ \mathbb{E}\left[\varepsilon_i^2\right]$ is low). Due to space limitation, we do not continue the simulation demonstration in this paper.

There are still many areas in this paper that can be further improved and studied. The robust parameter $\varpi$ of Huber loss in the paper can be determined through cross-validation, while \cite{Wang2021} proposed a data-driven adaptive equation to determine the magnitude of the robust parameter in high-dimensional sparse linear regression model. Whether a similar adaptive method for solving the robustification parameter can be designed for low-rank tensor regression model is a question for future research. On the other hand, the assumption in Theorem \ref{theorem1} that each covariate term is i.i.d. is difficult to satisfy in practice. Whether this condition can be weakened is a very worthwhile research question. 

\section*{Declaration of competing interest}
The authors declare that they have no known competing financial interests or personal relationships that could have appeared to influence the work reported in this paper.

\section*{Funding}
This research did not receive any specific grant from funding agencies in the public, commercial, or not-for-profit sectors.



\appendix
\section{Proof of Theorem \ref{theorem1}}
\label{sec:proof1}
For the convenience of the proof, we use the following mathematical notation: Since the loss function $\mathcal{L}_{\varpi}(\mathcal{A})=\frac{1}{n}\sum_{i=1}^{n}\ell_{\varpi}(y_{i}-\left\langle\mathcal{X}_{i}, \mathcal{A}\right\rangle)$, denote $\mathcal{X}: \mathbb{R}^{p_1 \times p_2 \times p_3} \rightarrow \mathbb{R}^n$ as the linear operator such that $[\mathcal{X}(\mathcal{A})]_i=$ $\left\langle\mathcal{X}_i, \mathcal{A}\right\rangle$ and $\mathcal{X}^*$ is denoted as the adjoint operator of $\mathcal{X}$:
\begin{align*}
\mathcal{X}^*(x)=\frac{1}{n}\sum_{j=1}^n x_j \mathcal{X}_j, \quad x \in \mathbb{R}^n.	
\end{align*}
Then equation (\ref{eq}) can be rewritten as $y=\mathcal{X}\left(\mathcal{A}^*\right)+\varepsilon \in \mathbb{R}^n,  \varepsilon:=(\varepsilon_{1}, \ldots, \varepsilon_{n})^{\top}$ and the gradient of the loss function $\nabla\mathcal{L}_{\varpi}(\mathcal{A})$ has the analytic form $\mathcal{X}^*\left(\psi_{\varpi}\left(y-\mathcal{X}(\mathcal{A})\right)\right)$. For each iteration $t=0,1,\ldots, T_{\max}$, define the measurement error:
\begin{align*}
E^{(t)}:=\min _{\substack{\mathbf{R}_k \in \mathbb{O}^{p_k\times r_k} \\ k=1,2,3}}\left\{\sum_{k=1}^3\left\|\mathbf{U}_k^{(t)}-\mathbf{U}_k^* \mathbf{R}_k\right\|_{F}^2+\left\|\mathcal{S}^{(t)}-\llbracket \mathcal{S}^* ; \mathbf{R}_1^{\top}, \mathbf{R}_2^{\top}, \mathbf{R}_3^{\top} \rrbracket\right\|_{F}^2\right\}.
\end{align*}
With the above notation, we proceed to prove the theorem in the following two steps:
 
Step (1): $E^{(0)} \lesssim \frac{\underline{\lambda}^{1 / 2}}{\kappa^{3 / 2}}$;

Step (2): The loss function $\mathcal{L}_{\varpi}(\mathcal{A}^{(T)})$ satisfies the constrained strong convexity condition with high probability within the closed set centered at $\mathcal{A}^*$.

Proof of step (1): According to the proof of Theorem 4.1 in \cite{Han2022}, we have
\begin{align*}
	& \left\|\mathcal{A}^{(0)}-\mathcal{A}^{*}\right\|_{F} \\
	\leq{}& \left\|\mathcal{A}^{*} \times_{1}\mathbb{P}_{\widetilde{\mathbf{U}}_{1}} \times_{2} \mathbb{P}_{\widetilde{\mathbf{U}}_{2}}\times_{3}\mathbb{P}_{\widetilde{\mathbf{U}}_{3}}-\mathcal{A}^{*}\right\|_{F}+\left\|\left(\widetilde{\mathcal{A}}-\mathcal{A}^{*}\right)\times_{1}\mathbb{P}_{\widetilde{\mathbf{U}}_{1}}\times_{2}\mathbb{P}_{\widetilde{\mathbf{U}}_{2}}\times_{3}\mathbb{P}_{\widetilde{\mathbf{U}}_{3}}\right\|_{F} \\
	\leq{}& \sum_{k=1}^{3}\left\|\mathcal{M}_{k}(\mathcal{A}^{*})\right\|_{\text{op}}\cdot\left\|\sin\Theta(\widetilde{\mathbf{U}}_{k},\mathbf{U}_{k})\right\|_{F}+\sup_{\substack{\mathcal{T} \in \mathbb{R}^{p_{1} \times p_{2} \times p_{3}}\\ \|\mathcal{T}\|_{\mathrm{F}} \leq 1}}\left\langle\widetilde{\mathcal{A}}-\mathcal{A}^{*},\llbracket\mathcal{T};\widetilde{\mathbf{U}}_{1}^{\top},\widetilde{\mathbf{U}}_{2}^{\top}, \widetilde{\mathbf{U}}_{3}^{\top}\rrbracket\right\rangle \\
	\leq{}& \bar{\lambda}\sum_{k=1}^3\sqrt{r_{k}}\left(\frac{\sigma_{r_{k}}\left(\mathbf{U}_{k}^{*\top} \mathcal{M}_{k}(\widetilde{\mathcal{A}})\right)\left\|\mathbf{U}_{k,\perp}^{*\top} \mathcal{M}_{k}(\widetilde{\mathcal{A}})\mathbb{P}_{(\mathbf{U}_{k}^{*\top} \mathcal{M}_{k}(\widetilde{\mathcal{A}}))^{\top}}\right\|_{\text{op}}}{\sigma_{r_{k}}^2\left(\mathbf{U}_{k}^{\top}\mathcal{M}_{k}(\widetilde{\mathcal{A}})\right)-\sigma_{r_{k}+1}^2\left(\mathcal{M}_{k}(\widetilde{\mathcal{A}})\right)}\wedge 1\right) \\
	&+ \sup_{\substack{\mathcal{T}\in \mathbb{R}^{p_{1} \times p_{2} \times p_{3}}, \|\mathcal{T}\|_{\mathrm{F}}\leq1\\ \text{rank}(\mathcal{T})\leq(r_{1},r_{2},r_{3})}}\left\langle\widetilde{\mathcal{A}}-\mathcal{A}^{*},\mathcal{T}\right\rangle=: T_{1}+T_{2},
\end{align*}
where the last inequality follows from Proposition 1 of \cite{Cai2018}. We first find the upper bound of $T_{2}$: For a fixed $\mathcal{T}$ with $\|\mathcal{T}\|_{F}\leq 1$,
\begin{align*}
\left|\left\langle\widetilde{\mathcal{A}}-\mathcal{A}^{*},\mathcal{T}\right\rangle\right|&=\left|\sum_{(j,k,l)}\left(\widetilde{\mathcal{A}}_{(i,j,k)}-\mathcal{A}^{*}_{(i,j,k)}\right)\mathcal{T}_{(j,k,l)}\right|=\left|\frac{1}{n}\sum_{i=1}^{n}\psi_{\tau}(y_{i})z_{i}-\mathbb{E}\left[y_{i}z_{i}\right]\right|\\&\leq \left|\frac{1}{n}\sum_{i=1}^{n}\psi_{\tau}(y_{i})z_{i}-\mathbb{E}\left[\psi_{\tau}(y_{i})z_{i}\right]\right|	+\left|\mathbb{E}\left[\psi_{\tau}(y_{i})z_{i}\right]-\mathbb{E}\left[y_{i}z_{i}\right]\right|,
\end{align*}
where $z_{i}:=\sum_{(j,k,l)}\mathcal{X}_{i(j,k,l)}\mathcal{T}_{(j,k,l)}$. By the rotation invariance of sub-Gaussian random variable, we get that $z_{i}$ is also a sub-Gaussian random variable with $\|z_{i}\|_{\psi_{2}}=K$. Therefore, since $\sqrt[k]{\mathbb{E}|z_{i}|^k}\leq \sqrt{k}K$ for all $k\geq1$ and $\|\text{vec}(\mathcal{X}_{i})\|_{\psi_{2}}=K$, we have for $q\geq2$,
\begin{align}
		&\mathbb{E}\left|\psi_{\tau}(y_{i})z_{i}\right|^q\leq \tau^{q-2}\mathbb{E}|y_{i}^{2}z_{i}^{q}|\leq 2\cdot\tau^{q-2}\left(\mathbb{E}|\varepsilon_{i}|^{2}|z_{i}^q|+\mathbb{E}\left|\left\langle\mathcal{X}_{i}, \mathcal{A}^{*}\right\rangle\right|^{2} |z_{i}^q|\right) \notag\\
		\leq{}& 2\tau^{q-2}\left(\sqrt[k]{\mathbb{E}\left(\mathbb{E}\left(|\varepsilon_{i}|^{2}|\mathcal{X}_{i}\right)^k\right)}\left(\mathbb{E}|z_{i}^{\frac{qk}{k-1}}|\right)^{\frac{k-1}{k}}+\sqrt{\mathbb{E}\left|\left\langle\mathcal{X}_{i}, \mathcal{A}^{*}\right\rangle\right|^{4}\mathbb{E}|z_{i}^{2q}|}\right) \notag \\
		\leq{}& 2\tau^{q-2}\left(M\left(\frac{K^2qk}{k-1}\right)^{q/2}+4K^{2}\|\mathcal{A}^{*}\|_{F}^{2}\left(K\sqrt{2q}\right)^q\right) \notag \\
		\leq{}& 2q! M\left(eK\tau\sqrt{\frac{2 k}{k-1}}\right)^{q-2}, \label{eq1}
\end{align}
where the last inequality is derived from Stirling's inequality and $\|\mathcal{A}^{*}\|_{F}^2\leq \mathbb{E}\left|\left\langle\mathcal{X}_{i}, \mathcal{A}^{*}\right\rangle\right|^2\leq \mathbb{E}[y_{i}^2]\leq R$. By Bernstein's inequality, we obtain that there exists an absolute constant $C$ depending on $K, k$ and $c$ such that
\begin{equation}\label{eq2}
	\operatorname{P}\left(\left|\frac{1}{n}\sum_{i=1}^{n}\psi_{\tau}(y_{i})z_{i} - \mathbb{E}\psi_{\tau}(y_{i})z_{i}\right|\leq C\sqrt{\frac{R t}{n}}+C\frac{\tau t}{n}\right) \geq 1-2e^{-t}.
\end{equation}
For the second term, we have
\begin{align}\label{eq3}
& \left|\mathbb{E}\left[\psi_{\tau}(y_{i})z_{i}\right] - \mathbb{E}\left[y_{i}z_{i}\right]\right| = \left|\mathbb{E}\left[(\psi_{\tau}(y_{i})-y_{i})z_{i}\right]\right| \notag \\
={} & \left|\mathbb{E}\left[(\tau \operatorname{sign}(y_{i}) - y_{i}) 1_{\{|y_{i}|>\tau\}} z_{i}\right]\right| \notag \\
\le{} & \mathbb{E}\left[|y_{i}| 1_{\{|y_{i}|>\tau\}} |z_{i}|\right] + \tau \mathbb{E}\left[1_{\{|y_{i}|>\tau\}} |z_{i}|\right] \notag \\
\le{} & \sqrt{\mathbb{E}[y_i^2 z_i^2] \operatorname{P}(|y_i| > \tau)} + \tau \sqrt{\mathbb{E}[z_i^2] \operatorname{P}(|y_i| > \tau)} \notag \\
\le{} & \frac{\sqrt{\mathbb{E}[y_i^2 z_i^2] \mathbb{E}[y_i^2]}}{\tau} + \frac{\tau \sqrt{\mathbb{E}[z_i^2]\mathbb{E}[y_i^2]}}{\tau} \notag \\
\lesssim{} & \frac{\sqrt{R \cdot R}}{\tau} + \sqrt{1 \cdot R} \lesssim R/\tau.
\end{align}
Therefore, by combining (\ref{eq2}) with (\ref{eq3}), there is a constant $C_{1}$ depending only on $K, k$ and $c$ such that
\begin{align*}
	\operatorname{P}\left(\left|\left\langle\widetilde{\mathcal{A}}-\mathcal{A}^{*},\mathcal{T}\right\rangle\right|\leq C_{1}\sqrt{\frac{Rt}{n}}+C_{1}\frac{\tau t}{n}+C_{1}R
		/\tau\right)\geq  1-2e^{-t}.
\end{align*}
By following the $\frac{1}{3}$-net argument in Lemma E.5 of \cite{Han2022}, we can obtain that with probability at least $1-2\exp\left(df(\log(13)-t)\right),$
\begin{align*}
	\sup_{\substack{\mathcal{T}\in \mathbb{R}^{p_{1} \times p_{2} \times p_{3}}, \|\mathcal{T}\|_{\mathrm{F}}\leq1\\ \text{rank}(\mathcal{T})\leq(r_{1},r_{2},r_{3})}}
		\left\langle\widetilde{\mathcal{A}}-\mathcal{A}^{*},\mathcal{T}\right\rangle\leq C\sqrt{\frac{M df\times t}{n}}+C\frac{\tau df\times t}{n}+C_{1}M/\tau,
\end{align*}
where $C_{2}$ is an absolute constant. By choosing $\tau\asymp_{K,k}\left(Mn/df\right)^{\frac{1}{2}}$ and  $t>\log(13)$, the above inequality changes into
\begin{equation}\label{eq4}
	\operatorname{P}\left(\sup_{\substack{\mathcal{T}\in \mathbb{R}^{p_{1} \times p_{2} \times p_{3}}, \|\mathcal{T}\|_{\mathrm{F}}\leq1\\ \text{rank}(\mathcal{T})\leq(r_{1},r_{2},r_{3})}}\left\langle\widetilde{\mathcal{A}}-\mathcal{A}^{*},\mathcal{T}\right\rangle\leq C_{3}\sqrt{\frac{Rdf}{n}}\right)\geq 1-2\exp\left(-C_4 df\right).
\end{equation}
		Next, we solve for the upper bound of $T_{1}$. In the proof of Theorem 4 in \cite{Zhang2020}, via applying Lemma \ref{lemma2} and Lemma \ref{lemma3}, we obtain that there exists a positive constant $c<1$ such that with probability at least $1-2 e^{-c p_{1}}-4e^{-t}$, the following inequality holds: 
\begin{equation}\label{eq6}
	\begin{aligned}
		&\quad\frac{\sigma_{r_{k}}\left(\mathbf{U}_{k}^{*\top} \mathcal{M}_{k}(\widetilde{\mathcal{A}})\right)\left\|\mathbf{U}_{k\perp}^{*\top} \mathcal{M}_{k}(\widetilde{\mathcal{A}})\mathbb{P}_{\left(\mathbf{U}_{k}^{*\top} \mathcal{M}_{k}(\widetilde{\mathcal{A}})\right)^{\top}}\right\|_{\text{op}}}{\sigma_{r_{k}}^2\left(\mathbf{U}_{k}^{*\top}\mathcal{M}_{k}(\widetilde{\mathcal{A}})\right)-\sigma_{r_{k}+1}^2\left(\mathcal{M}_{k}(\widetilde{\mathcal{A}})\right)}\\&\lesssim\frac{\left((1-c) \sigma_{r_{1}}\left(\mathcal{M}_{k}(\mathcal{A}^{*})\right)+ a_{1}\sqrt{p_{2}p_{3} / n}\right) a_{1}\sqrt{p_{1} / n}}{\left((1-c) \sigma_{r_{1}}\left(\mathcal{M}_{k}(\mathcal{A}^{*})\right)+ a_{2}\sqrt{p_{2}p_{3} / n}\right)^2-a_{3}(p_{2}p_{3}+C\sqrt{p_{1}p_{2}p_{3}})}\\&\lesssim\frac{(1-c)
					\sigma_{r_{1}}\left(\mathcal{M}_{k}(\mathcal{A}^{*})\right)a_{1}\sqrt{p_{1} /n}+a_{1}^2\sqrt{p_{1}p_{2}p_{3}} / n}{(1-c)^2\sigma_{r_{1}}^2\left(\mathcal{M}_{k}(\mathcal{A}^{*})\right)-2\tau^2\sqrt{\frac{2t}{n}}p_{2}p_{3}/n-C\frac{\mathbb{E}[\eta_{i}^2]+\tau^2\sqrt{2t/n}}{n}\sqrt{p_{1}p_{2}p_{3}}}\\&\lesssim\frac{(1\!-\!c)\sigma_{r_{1}}\left(\mathcal{M}_{k}(\mathcal{A}^{*})\right)\left(\|\mathcal{A}^{*}\|_{F}\sqrt{p_{1}/n}\!+\!\sqrt[4]{\frac{2t}{n}}\|\mathcal{A}^{*}\|_{F}\sqrt{p_{1}/df}\right)\!+\!\|\mathcal{A}^{*}\|_{F}^2\frac{\bar{p}^{3/2}}{n}\!+\!\frac{\|\mathcal{A}^{*}\|_{F}^2}{df}\sqrt{\frac{2t\bar{p}^{3}}{n}}}
				{\sigma_{r_{1}}^2\left(\mathcal{M}_{k}(\mathcal{A}^{*})\right)},
	\end{aligned}
\end{equation}
where $a_{1}:=\sqrt{\|\mathcal{A}^{*}\|_{F}^2 + \tau^2\sqrt{\frac{2t}{n}}}$, $a_{2}:=\sqrt{\mathbb{E}[\psi_{\tau}(\eta_{i})^2] - \tau^2\sqrt{\frac{2t}{n}}}$ and $a_{3}:=\frac{\mathbb{E}[\psi_{\tau}(\eta_{i})^2]+\tau^2\sqrt{\frac{2t}{n}}}{n}$. Therefore, combining (\ref{eq6}) with (\ref{eq4}), by the union bound, we obtain that when $t=c\log(\bar{p})$, with probability at least $1-4\bar{p}^{-c}-6\exp(-c\bar{p})-2\exp\left(C_5 df\right)$ such that
\begin{align*}
	&\left\|\mathcal{A}^{(0)}-\mathcal{A}^{*}\right\|_{F} \\
	\leq{}& C_{4}\|\mathcal{A}^{*}\|_{F}\sqrt{\frac{df}{n}}+C_{4}\bar{\lambda}\sqrt{\bar{r}}\|\mathcal{A}^{*}\|_{F}\\&\times\frac{(1-c)\sigma_{r_{1}}\left(\mathcal{M}_{k}(\mathcal{A}^{*})\right)\left(\sqrt{\bar{p}/n}+\sqrt[4]{\frac{2\log(\bar{p})}{n}}\sqrt{\bar{p}/df}\right)+\|\mathcal{A}^{*}\|_{F}\frac{\bar{p}^{3/2}}{n}+\frac{\|\mathcal{A}^{*}\|_{F}}{df}\sqrt{\frac{2\log(\bar{p})\bar{p}^{3}}{n}}}{\sigma_{r_{1}}^2\left(\mathcal{M}_{k}(\mathcal{A}^{*})\right)}.
\end{align*}
Because of the inequalities $(x+y)^2\leq 2x^2+2y^2$ and $n\gtrsim \max\left\{ R\kappa^2\bar{p}^{\frac{3}{2}}\sqrt{\bar{r}},\kappa^6\bar{p}\bar{r}^2\right\}$, by Lemma E.2 of \cite{Han2022}, we have
\begin{align*}
	E^{(0)} &\leq 11 \kappa^2 b^{-6}\left\|\mathcal{A}^{(0)}-\mathcal{A}^{*}\right\|_{F}^2\\& \lesssim \kappa^2 b^{-6} \bar{\lambda}^2\bar{r}\left(\frac{\frac{\|\mathcal{A}^{*}\|_{F}^2p_{1}}{n}+\sqrt{\frac{2t}{n}}\frac{p_{1}\|\mathcal{A}^{*}\|_{F}^2}{df}}{\sigma_{r_{1}}^2\left(\mathcal{M}_{k}(\mathcal{A}^{*})\right)}+\frac{\frac{\|\mathcal{A}^{*}\|_{F}^4\bar{p}^3}{n^2}+\frac{2\|\mathcal{A}^{*}\|_{F}^4t\bar{p}^3}{df^2 n}}{\sigma_{r_{1}}^4\left(\mathcal{M}_{k}(\mathcal{A}^{*})\right)}\right)  \lesssim \frac{\underline{\lambda}^{1 / 2}}{\kappa^{3 / 2}}.
\end{align*}

Proof of step (2): We generalize Lemma 4 of \cite{Sun2020} to present that the restricted strong convexity condition is satisfied for the Huber loss under the set $\mathcal{C}(R):=\{\mathcal{A} \in \mathbb{R}^{p_{1} \times p_{2} \times p_{3}}:\|\mathcal{A}-\mathcal{A}^{*}\|_{F}\leq R,\text{rank}\left(\mathcal{A}-\mathcal{A}^{*}\right)\leq 2(r_{1},r_{2},r_{3})\}$ with high probability.

\begin{lemma}\label{lemma1}
	Suppose that all entries of $\mathcal{X}_{i}$ are i.i.d. sampled from sub-Gaussian distribution with mean-zero and variance-one. Then for all $\mathcal{A}\in \mathcal{C}(R)$, as long as the following conditions hold:
\begin{align*}
    \varpi \gtrsim \max \left \{ (4M)^{\frac{1}{2}}, 4c_{1}^2R \right \}, \quad n \gtrsim (\varpi/R)^2\left(\bar{p}^{3/2}\bar{r}+t\right),
\end{align*}
we have that

\begin{align*}
	\operatorname{P}\left(\left\langle\nabla\mathcal{L}_{\varpi}(\mathcal{A})-\nabla\mathcal{L}_{\varpi}(\mathcal{A}^{*}),\mathcal{A}-\mathcal{A}^{*}\right\rangle\geq\frac{4}{5}\left\|\mathcal{A}-\mathcal{A}^{*}\right\|_{F}^{2}\right)\geq 1-e^{-t},
\end{align*}
where $c_{1}\geq\sup_{\substack{\mathcal{V} \in \mathbb{R}^{p_{1} \times p_{2} \times p_{3}}\\ \|\mathcal{V}\|_{\mathrm{F}} \leq 1}}(\mathbb{E}\langle \mathcal{V},\mathcal{X}_{i}\rangle^4)^{1/4}$ is a positive constant.
\end{lemma}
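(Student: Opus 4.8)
\noindent The plan is to reduce the restricted strong convexity quantity, via the monotonicity and Lipschitz structure of $\psi_{\varpi}$, to a lower bound for an average of \emph{bounded} nonnegative i.i.d.\ terms, and then to control that average by a first-moment estimate together with a uniform $\tfrac13$-net concentration over low-Tucker-rank tensors. First, write $\Delta=\mathcal{A}-\mathcal{A}^{*}$, $u_{i}=\langle\mathcal{X}_{i},\Delta\rangle$ and $\varepsilon_{i}=y_{i}-\langle\mathcal{X}_{i},\mathcal{A}^{*}\rangle$; since $\psi_{\varpi}=\ell_{\varpi}'$ we have $\nabla\mathcal{L}_{\varpi}(\mathcal{A})=-\frac1n\sum_{i}\psi_{\varpi}(\varepsilon_{i}-u_{i})\mathcal{X}_{i}$, so the quantity to be bounded equals $\frac1n\sum_{i}\big[\psi_{\varpi}(\varepsilon_{i})-\psi_{\varpi}(\varepsilon_{i}-u_{i})\big]u_{i}$. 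Because $\psi_{\varpi}$ is nondecreasing, every summand is nonnegative; because $\psi_{\varpi}$ is $1$-Lipschitz and equals the identity on $[-\varpi,\varpi]$, on the event $\{|u_{i}|\le\varpi/2,\ |\varepsilon_{i}|\le\varpi/2\}$ the summand is exactly $u_{i}^{2}$. Discarding the remaining nonnegative terms,
\[
\big\langle\nabla\mathcal{L}_{\varpi}(\mathcal{A})-\nabla\mathcal{L}_{\varpi}(\mathcal{A}^{*}),\Delta\big\rangle\ \ge\ \frac1n\sum_{i=1}^{n}u_{i}^{2}\,\mathbf 1\{|u_{i}|\le\varpi/2\}\,\mathbf 1\{|\varepsilon_{i}|\le\varpi/2\}.
\]
Setting $\mathcal{V}=\Delta/\|\Delta\|_{F}$, $\tilde u_{i}=\langle\mathcal{X}_{i},\mathcal{V}\rangle$ and using $\|\Delta\|_{F}\le R$ to replace the data-dependent threshold $\varpi/(2\|\Delta\|_{F})$ by the worst case $\varpi/(2R)$, it suffices to prove that $G(\mathcal{V}):=\frac1n\sum_{i}\tilde u_{i}^{2}\,\mathbf 1\{|\tilde u_{i}|\le\varpi/(2R)\}\,\mathbf 1\{|\varepsilon_{i}|\le\varpi/2\}\ge\tfrac45$ uniformly over all $\mathcal{V}$ with $\|\mathcal{V}\|_{F}=1$ and Tucker rank at most $2(r_{1},r_{2},r_{3})$ (the effective rank of $\mathcal{C}(R)$). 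The payoff of this reduction is that $G$ is an average of \emph{bounded} nonnegative i.i.d.\ quantities.

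Next I would bound $\mathbb{E}\,G(\mathcal{V})$ from below. Since the entries of $\mathcal{X}_{i}$ are mean-zero and variance-one, $\mathbb{E}\tilde u_{i}^{2}=\|\mathcal{V}\|_{F}^{2}=1$, so by a union bound on the two complementary indicators,
\[
\mathbb{E}\,G(\mathcal{V})\ \ge\ 1-\mathbb{E}\big[\tilde u_{i}^{2}\mathbf 1\{|\tilde u_{i}|>\varpi/(2R)\}\big]-\mathbb{E}\big[\tilde u_{i}^{2}\mathbf 1\{|\varepsilon_{i}|>\varpi/2\}\big].
\]
For the first expectation, $\mathbf 1\{|\tilde u_{i}|>\varpi/(2R)\}\le(2R/\varpi)^{2}\tilde u_{i}^{2}$ gives the bound $(2R/\varpi)^{2}\mathbb{E}\tilde u_{i}^{4}\le(2R/\varpi)^{2}c_{1}^{4}$, which the hypothesis $\varpi\gtrsim c_{1}^{2}R$ makes an arbitrarily small constant (take the implied constant large). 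For the second, $\mathbf 1\{|\varepsilon_{i}|>\varpi/2\}\le(2/\varpi)^{1+\delta}|\varepsilon_{i}|^{1+\delta}$, and after conditioning on $\mathcal{X}_{i}$ and applying Hölder with exponents $k$ and $k/(k-1)$,
\[
\mathbb{E}\big[\tilde u_{i}^{2}|\varepsilon_{i}|^{1+\delta}\big]=\mathbb{E}\big[\tilde u_{i}^{2}\,\mathbb{E}(|\varepsilon_{i}|^{1+\delta}\mid\mathcal{X}_{i})\big]\ \le\ M_{\delta}\,\big(\mathbb{E}|\tilde u_{i}|^{2k/(k-1)}\big)^{(k-1)/k}\ \lesssim_{k}\ M_{\delta}K^{2},
\]
the last step using that $\tilde u_{i}$ is sub-Gaussian with $\|\tilde u_{i}\|_{\psi_{2}}\lesssim K$ by the rotation-invariance property already invoked for $z_{i}$ earlier in the paper; hence the second expectation is $\lesssim_{k}M_{\delta}K^{2}\varpi^{-(1+\delta)}$, a small constant under the first part of the hypothesis on $\varpi$. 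Choosing the implicit constants so that both terms fall below $\tfrac1{20}$ yields $\mathbb{E}\,G(\mathcal{V})\ge\tfrac9{10}$.

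It remains to turn this into a uniform high-probability statement. Crucially, $\tilde u_{i}^{2}\mathbf 1\{|\tilde u_{i}|\le\varpi/(2R)\}\mathbf 1\{|\varepsilon_{i}|\le\varpi/2\}$ is nonnegative, bounded by $(\varpi/(2R))^{2}$, with variance at most $\mathbb{E}\tilde u_{i}^{4}\le c_{1}^{4}$, so Bernstein's inequality controls $|G(\mathcal{V})-\mathbb{E}\,G(\mathcal{V})|$ for a fixed $\mathcal{V}$. A standard $\tfrac13$-net over the set of unit-Frobenius tensors of Tucker rank $\le2(r_{1},r_{2},r_{3})$ — with the usual mild smoothing of the hard indicator so that $G$ becomes Lipschitz in $\mathcal{V}$, and with metric-entropy/peeling contribution $\bar p^{3/2}\bar r$, exactly as in Lemma~E.5 of Han et al.\ (2022) and the reasoning behind their $n\gtrsim\bar p^{3/2}\bar r$ requirement — then gives $\sup_{\mathcal{V}}|G(\mathcal{V})-\mathbb{E}\,G(\mathcal{V})|\le\tfrac1{10}$ with probability at least $1-e^{-t}$, provided $n\gtrsim(\varpi/R)^{2}\big(\bar p^{3/2}\bar r+t\big)$; here the factor $(\varpi/R)^{2}$ is precisely the Bernstein ``range'' term $(\varpi/(2R))^{2}\big(\bar p^{3/2}\bar r+t\big)/n$. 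Combining with the expectation bound gives $G(\mathcal{V})\ge\tfrac9{10}-\tfrac1{10}=\tfrac45$ uniformly, which is the assertion.

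The main obstacle is this last step: extracting the uniform concentration over the low-Tucker-rank tensor manifold with the sharp dimension factor $\bar p^{3/2}\bar r$ (rather than the naive parameter count $df$) and the correct $(\varpi/R)^{2}$ range dependence, handling the non-Lipschitz indicators in the net/peeling argument, and bookkeeping all constants so that the final lower bound lands at exactly $\tfrac45$. The genuinely new ingredient relative to the matrix and vector literature is organizational: the first-paragraph reduction that replaces the unwieldy Huber gradient difference by the clean bounded quadratic $u_{i}^{2}\mathbf 1\{|u_{i}|\le\varpi/2\}\mathbf 1\{|\varepsilon_{i}|\le\varpi/2\}$, after which the tensor covering arguments already developed in the paper can be reused essentially verbatim.
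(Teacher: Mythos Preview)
Your reduction to the truncated quadratic $\tfrac1n\sum_i u_i^2\mathbf 1\{|u_i|\le\varpi/2\}\mathbf 1\{|\varepsilon_i|\le\varpi/2\}$ and your expectation lower bound are exactly the steps that Sun, Zhou and Fan (2020) carry out in their Lemma~4, and the paper adopts them verbatim; so far the two proofs coincide. The divergence is in the uniform-in-$\mathcal V$ concentration step. The paper does \emph{not} use a Bernstein-plus-covering argument: it keeps Sun et~al.'s symmetrization/Talagrand route and reduces to bounding the Gaussian complexity
\[
\mathbb E\Big[\sup_{\mathcal A\in\mathcal C(R)}\mathbb Z_{\mathcal A}\Big],\qquad
\mathbb Z_{\mathcal A}=\frac{\varpi}{2Rn}\sum_i G_i'\,\frac{\langle\mathcal X_i,\mathcal A-\mathcal A^*\rangle}{\|\mathcal A-\mathcal A^*\|_F},
\]
and the only new ingredient is how this supremum is estimated for low-Tucker-rank $\Delta$: the paper introduces a ``balanced'' matricization $\overline{\mathcal M}$ that reshapes $\tfrac1n\sum_iG_i'\mathcal X_i$ into a $p_1\lceil\sqrt{p_3}\rceil\times p_2\lceil\sqrt{p_3}\rceil$ matrix with independent sub-exponential entries, and then invokes Latala's theorem to get $\mathbb E\|\overline{\mathcal M}(\cdot)\|_{\mathrm{op}}\lesssim K\sqrt{\bar p^{3/2}/n}$. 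Combining with $\|\overline{\mathcal M}(\Delta)\|_*\le\sqrt{2r_1}\|\Delta\|_F$ yields the $\bar p^{3/2}\bar r$ sample-size factor. So in the paper the $\bar p^{3/2}$ is a \emph{random-matrix operator-norm} phenomenon, not a metric-entropy one.

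Your Bernstein-plus-covering alternative is viable and, once the Lipschitz/smoothing step is made rigorous (bounding $|G(\mathcal V)-G(\mathcal V')|\lesssim(\varpi/R)\|\mathcal V-\mathcal V'\|_F$ via the tensor RIP of Han et~al.\ Lemma~E.6, possibly at the price of a finer net of mesh $\sim R/\varpi$), it actually yields the sharper requirement $n\gtrsim(\varpi/R)^2(df+t)$, since the $\tfrac13$-net of Tucker-rank-$2r$ unit tensors has log-cardinality $\asymp df$ (Han et~al.\ Lemma~E.5), not $\bar p^{3/2}\bar r$. Your sentence attributing the $\bar p^{3/2}\bar r$ factor to ``metric-entropy/peeling, exactly as in Lemma~E.5'' is therefore a misidentification of the source of that exponent, though harmless for the conclusion because $df\le\bar p^{3/2}\bar r$. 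In short: correct overall strategy, genuinely different uniform step, with the paper's route buying a clean contraction argument at the cost of the looser $\bar p^{3/2}$, and yours buying a tighter constant at the cost of the indicator-Lipschitz bookkeeping you flag as the main obstacle.
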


\begin{proof}
	The difference with the proof of Lemma 4 in \cite{Sun2020} is that we denote
\begin{align*}
	\mathbb{Z}_{\mathcal{A}}:=\frac{\tau}{2 R n} \sum_{i=1}^{n} G_{i}^{\prime} \frac{\left\langle\mathcal{X}_{i}, \mathcal{A}-\mathcal{A}^{*}\right\rangle}
			{\left\|\mathcal{A}-\mathcal{A}^{*}\right\|_{F}},
\end{align*}
where $G_{i}^{\prime}\stackrel{i.i.d.}{\sim} N(0,1)$ and are independent with $\mathcal{X}_{i}$. Because $G_{i}^{\prime} \mathcal{X}_{i(j,k,l)}$ is a sub-exponential random variable with $\|G_{i}^{\prime} \mathcal{X}_{i(j,k,l)}\|_{\psi_{1}}=K$, by Theorem 2.5 of \cite{Boucheron2013}, we have that
\begin{align*}
	\mathbb{E}\left[\left|\frac{1}{n} \sum_{i=1}^{n} G_{i}^{\prime} \mathcal{X}_{i(j,k,l)}\right|^2\right]\lesssim\frac{K^2}{n} \quad\text{and}\quad \mathbb{E}\left[\left|\frac{1}{n} \sum_{i=1}^{n} G_{i}^{\prime} \mathcal{X}_{i(j,k,l)}\right|^4\right]\lesssim\frac{K^4}{n^2}.
\end{align*}
For $\{A_{i}\}_{i\in\left[\lceil \sqrt{p_3} \rceil\right]}\subset \mathbb{R}^{p_{1}\times p_{2}\left\lceil \sqrt{p_3} \right\rceil}$, denote 
\begin{align*}
\overline{\mathcal{M}}\left(\frac{1}{n} \sum_{i=1}^{n} G_{i}^{\prime} \mathcal{X}_{i}\right)=\left[\begin{array}{l:l:l:l}	A^{\top}_1 & A^{\top}_2 &\cdots&A^{\top}_{\left\lceil \sqrt{p_3} \right\rceil}\end{array}\right]^{\top}\in\mathbb{R}^{p_{1}\lceil \sqrt{p_3}\rceil\times p_{2}\lceil \sqrt{p_3} \rceil},	
\end{align*}
where
\begin{align*}
\left[\begin{array}{l:l:l:l}	A_1 & A_2 &\cdots&A_{\left\lceil \sqrt{p_3} \right\rceil}\end{array}\right]=\left[\begin{array}{l:l}\mathcal{M}_{1}\left(\frac{1}{n} \sum_{i=1}^{n} G_{i}^{\prime} \mathcal{X}_{i}\right)&\mathbf{0}_{p_{1}\times p_{2}\left(\lceil \sqrt{p_3} \rceil^2-p_{3}\right)}\end{array}\right].	
\end{align*}
Then, by \cite{Latala2005}, it follows that
\begin{align}
	\mathbb{E}\left[\left\|\overline{\mathcal{M}}\left(\frac{1}{n} \sum_{i=1}^{n} G_{i}^{\prime} \mathcal{X}_{i}\right)\right\|_{\text{op}}\right] & \lesssim_{K}\sqrt{\frac{p_{2}\lceil \sqrt{p_3} \rceil}{n}} + \sqrt{\frac{p_{1}\lceil \sqrt{p_3} \rceil}{n}}+\sqrt[4]{\frac{p_{1}\lceil \sqrt{p_3} \rceil^2 p_{3}}{n^2}} \notag \\
	&\lesssim_{K} \sqrt{\frac{\bar{p}^{3/2}}{n}}.\label{eq7}
\end{align}
Because of $\text{rank}\left(\overline{\mathcal{M}}\left(\mathcal{A}-\mathcal{A}^{*}\right)
			\right)\leq2r_{1}$, we get
\begin{align*}
	& \mathbb{E}\left\{\sup_{\mathcal{A}\in\mathcal{C}(R^{\prime})}\mathbb{Z}_{\mathcal{A}}\right\} \\
	={}&\frac{\varpi}{2 R^{\prime}}\mathbb{E}\left\{\sup_{\mathcal{A}\in\mathcal{C}(R^{\prime})} \frac{\left\langle\overline{\mathcal{M}}\left(\frac{1}{n} \sum_{i=1}^{n} G_{i}^{\prime} \mathcal{X}_{i}\right),\overline{\mathcal{M}}\left(\mathcal{A}-\mathcal{A}^{*}\right)\right\rangle}{\left\|\mathcal{A}-\mathcal{A}^{*}\right\|_{F}}\right\}\\
	\lesssim{}&\frac{\varpi}{ R^{\prime}}\mathbb{E}\left\{\sup_{\mathcal{A}\in\mathcal{C}(R^{\prime})} \left\|\overline{\mathcal{M}}\left(\frac{1}{n} \sum_{i=1}^{n} G_{i}^{\prime} \mathcal{X}_{i}\right)\right\|_{\text{op}}\left\|\overline{\mathcal{M}}\left(\mathcal{A}-\mathcal{A}^{*}\right)\right\|_{*}\!/\!\left\|\mathcal{A}-\mathcal{A}^{*}\right\|_{F}\right\}\\
	\lesssim{}& \frac{\varpi}{R^{\prime}}\mathbb{E}\left\{ \left\|\overline{\mathcal{M}}\left(\frac{1}{n} \sum_{i=1}^{n} G_{i}^{\prime} \mathcal{X}_{i}\right)\right\|_{\text{op}}\sqrt{r_{1}}\right\}\overset{(\ref{eq7})}{\lesssim_{K}} \frac{\varpi}{R^{\prime}}\sqrt{\frac{\bar{p}^{3/2}\bar{r}}{n}},
\end{align*}
where the first and second inequalities come from $\left \langle A,B\right\rangle\leq \|A\|_{\text{op}}\|B\|_{\star}$ where $A$ and $B$ are any two matrices of the same size, and $\|A\|_{\star}\leq \sqrt{\text{rank}(A)}\|A\|_{F}$.
\end{proof}

Based on Lemma \ref{lemma1} and $\varpi\asymp_{K,k}\left(Mn/df\right)^{\frac{1}{2}}$, we can get that at $T$-th iteration, $\mathcal{A}^{(T)}\in \mathcal{C}(\varpi)$ and (D.12) in the proof of Theorem 4 in \cite{Han2022} is satisfied with high probability. With the above two steps as a basis, by $\mathcal{A}^{(T)}\in \mathcal{C}(R)$, by Lemma E.6 of \cite{Han2022}, we obtain that as long as $n\gtrsim df$, for any rank-$(r_{1},r_{2},r_{3})$ tensor,
\begin{align*}
	& \left|\left\langle\mathcal{X}^{\prime}, \nabla \mathcal{L}_{\varpi}(\mathcal{A}^{(T)})-\nabla \mathcal{L}_{\varpi}\left(\mathcal{A}^{*}\right)\right\rangle\right| \\
	={}& \left|\left\langle\frac{1}{n}\mathcal{X}\left(\mathcal{X}^{\prime}\right),\psi_{\varpi}\left(\mathcal{X}(\mathcal{A}^{(T)})-y\right)-\psi_{\varpi}(\varepsilon)\right\rangle\right|\\
	\leq{}& \frac{1}{n}\left\|\mathcal{X}\left(\mathcal{X}^{\prime}\right)\right\|_{2}\left\|\psi_{\varpi}\left(\mathcal{X}(\mathcal{A}^{(T)})-y\right)-\psi_{\varpi}(\varepsilon)\right\|_{2}\\
	\leq{}& \frac{11}{9\sqrt{n}}\|\mathcal{X}^{\prime}\|_{F}\left\|\mathcal{X}(\mathcal{A}^{(T)}-\mathcal{A}^{\star})\right\|_{2}\leq\frac{121}{81}\|\mathcal{X}^{\prime}\|_{F}\left\|\mathcal{A}^{(T)}-\mathcal{A}^{\star}\right\|_{F} \\
	<{}& \frac{3}{2}\|\mathcal{X}^{\prime}\|_{F}\left\|\mathcal{A}^{(T)}-\mathcal{A}^{\star}\right\|_{F}
\end{align*}
with probability at least $1-C_{10}\exp(-C_{11}df)$. Finally, by following the lines of the proof of Theorem 4.2 in \cite{Han2022}, we obtain that
\begin{align*}
	\left\|\mathcal{A}^{(T+1)}-\mathcal{A}^{*}\right\|_{F}^2\leq C_{12}\left(\kappa^{4}\xi^2
		+\left(1-\frac{\eta_{0}}{1000\kappa^2}\right)^{T+1}\kappa^2\left\|\mathcal{A}^{(0)}
		-\mathcal{A}^{*}\right\|_{F}^2\right),
\end{align*}
where $\xi:=\sup_{\substack{\mathcal{T} \in \mathbb{R}^{p_{1} \times p_{2} \times p_{3}},\|\mathcal{T}\|_{\mathrm{F}} \leq 1,\\ \text{rank}(\mathcal{T})\leq (r_{1},r_{2},r_{3})}}\left|\left\langle\nabla \mathcal{L}_{\varpi}\left(\varepsilon_{i}\right), \mathcal{T}\right\rangle\right|$ and $\eta_{0}<c$, and $c$ is a small positive constant. Therefore, as long as $T_{\max}\gtrsim \log\left(\frac{\|\mathcal{A}^{(0)}-\mathcal{A}^{\star}\|_{F}}{\kappa\xi}\right)/\log\left(\frac{\kappa^2}{\kappa^2-2\rho\eta_{0}}\right)$, we have 
\begin{align*}
\left\|\mathcal{A}^{(T_{\max})}-\mathcal{A}^{*}\right\|_{F} \lesssim \kappa^{2}\xi.	
\end{align*}
By the proof of Theorem \ref{theorem2}, it follows that
\begin{align*}
	\operatorname{P}\left(\left\|\mathcal{A}^{(T_{\max})}-\mathcal{A}^{*}\right\|_{F}\lesssim \kappa^{2}\left(\frac{Mdf}{n}\right)^{\frac{1}{2}}
		\left(\sqrt{t}+t\right)\right)\geq 1-2\exp\left(df(\log(13)-t)\right).
\end{align*}

\section{Proof of Theorem \ref{theorem2}}
\label{sec:proof2}
Since
\begin{align*}
	\xi &:=  \sup_{\substack{\mathcal{T} \in \mathbb{R}^{p_{1} \times p_{2} \times p_{3}},\|\mathcal{T}\|_{\mathrm{F}} \leq 1,\\ \text{rank}(\mathcal{T})\leq (r_{1},r_{2},r_{3})}}\left|\left\langle\nabla \mathcal{L}_{\varpi}\left(\varepsilon_{i}\right), \mathcal{T}\right\rangle\right| \\
	&= \sup_{\substack{\mathcal{T} \in \mathbb{R}^{p_{1} \times p_{2} \times p_{3}},\|\mathcal{T}\|_{\mathrm{F}} \leq 1,\\ \text{rank}(\mathcal{T})\leq (r_{1},r_{2},r_{3})}}\left|\left\langle\frac{1}{n}\sum_{i=1}^{n}\ell'_{\varpi}(\varepsilon_{i})\mathcal{X}_{i}, \mathcal{T}\right\rangle\right|
\end{align*}
and  $\mathbb{E}\left[\varepsilon_{i}\mathcal{X}_{i(j,k,l)}\right]=\mathbb{E}\left[\mathcal{X}_{i(j,k,l)}\mathbb{E}\left[\varepsilon_{i}\big{|}\mathcal{X}_{i(j,k,l)}\right]\right]=0$ for $(j,k,l)\in [p_{1}]\times[p_{2}]\times[p_{3}]$, we have
\begin{align*}
	\left|\frac{1}{n}\sum_{i=1}^{n}\ell'_{\varpi}(\varepsilon_{i})\sum_{(j,k,l)}\mathcal{X}_{i(j,k,l)}\mathcal{T}_{(j,k,l)}\right|& \leq \left|\mathbb{E}\left[\ell'_{\varpi}(\varepsilon_{i})z_{i}\right]-\mathbb{E}\left[\varepsilon_{i}z_{i}\right]\right| \\
	&\quad +\left|\frac{1}{n}\sum_{i=1}^{n}\ell'_{\varpi}(\varepsilon_{i})z_{i}-\mathbb{E}\left[\ell'_{\varpi}(\varepsilon_{i})z_{i}\right]\right|.
\end{align*}
where $z_{i}:=\sum_{(j,k,l)}\mathcal{X}_{i(j,k,l)}\mathcal{T}_{(j,k,l)}$. For the first term, since $\left|\ell_{1}^{\prime}(x)-x\right| \lesssim x^2$, we have
\begin{align*}
	\left|\mathbb{E}\left[\ell'_{\varpi}(\varepsilon_{i})z_{i}\right]-\mathbb{E}\left[\varepsilon_{i}z_{i}\right]\right| &\leq\varpi\left| \mathbb{E}\left[\left\{\ell_{1}^{\prime}(\varepsilon_{i} / \varpi)-\varepsilon_{i} / \varpi\right\} z_{i}\right]\right| \\
	&\lesssim \varpi^{-1}\left|\mathbb{E}\left[ \varepsilon_{i}^2 z_{i}\right]\right| \lesssim \varpi^{-1} M K \sqrt{\frac{k}{k-1}}.
\end{align*}
For the second term, since $|\ell'_{\varpi}(\varepsilon_{i})|=\left|\varpi \ell_{1}^{\prime}\left(\varepsilon_i / \varpi\right)\right| \leq \min \left\{c_{1} \varpi, \left|\varepsilon_i\right|\right\}$, then for any $q\geq2$, 
\begin{align*}
	&\mathbb{E}\left|\ell'_{\varpi}(\varepsilon_{i})z_{i}\right|^q \\
	\leq{}& (c_{1}\varpi)^{q-2}\mathbb{E}|\varepsilon_{i}^{2}z_{i}^{q}| \leq (c_{1}\varpi)^{q-2}\sqrt[k]{\mathbb{E}\left(\mathbb{E}\left(|\varepsilon_{i}|^{2}|\mathcal{X}_{i}\right)^k\right)}\left(\mathbb{E}|z_{i}^{\frac{qk}{k-1}}|\right)^{\frac{k-1}{k}}\\
	\leq{}& (c_{1}\varpi)^{q-2}M\left(\frac{K^2qk}{k-1}\right)^{q/2}\leq q! M\left(eKc_{1}\varpi\sqrt{\frac{2 k}{k-1}}\right)^{q-2}.
\end{align*}
Therefore, by following the proof in \eqref{eq1}--\eqref{eq4}, we obtain that
\begin{align*}
\operatorname{P}\left(\xi\leq C_{5}\left(\frac{Mdf}{n}\right)^{\frac{1}{2}}
	\left(\sqrt{t}+t\right)\right)\geq 1-2\exp\left(df(\log(13)-t)\right).
\end{align*}

\begin{lemma}\label{lemma2}
	Consider $\{\eta_{i}\}_{i=1}^n$ are i.i.d. with $\mathbb{E}[\eta_{i}^{2}]\leq M$ and $\{X_{i}\}_{i=1}^{n}$ are $n$ i.i.d. random $d_{1}\times d_{2}$ matrices whose entries $X_{i(j,k)}$are i.i.d. sub-gaussian random variables with $\|X_{i(j,k)}\|_{\psi_{2}}=K$. Denote $A=\frac{1}{n}\sum_{i=1}^{n}\psi_{\tau}(\eta_{i})X_{i}$ where $\tau$ is given in Theorem \ref{theorem1}. Then there exist positive constants $C$ and $c$ depending only on $K$ such that
		\begin{align*}
			\operatorname{P}\left(\sigma_{\max}^2(A)\leq\frac{\mathbb{E}[\psi_{\tau}(\eta_{i})^2]+\tau^2\sqrt{\frac{2t}{n}}}{n}\left(\sqrt{d_{1}}+C\sqrt{d_{2}}+\sqrt{2t}\right)^2\right) & \geq 1-2e^{-ct}, \\
			\operatorname{P}\left(\sigma_{\min}^2(A)\geq\frac{\mathbb{E}[\psi_{\tau}(\eta_{i})^2]-\tau^2\sqrt{\frac{2t}{n}}}{n}\left(\sqrt{d_{1}}-C\sqrt{d_{2}}-\sqrt{2t}\right)^2\right) & \geq 1-2e^{-ct}.
		\end{align*}
\end{lemma}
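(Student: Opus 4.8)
The plan is to condition on the noise sequence $\{\eta_i\}_{i=1}^{n}$, so that the weights $w_i := \psi_{\tau}(\eta_i)$ become fixed numbers with $|w_i|\le\tau$, and to observe that, given these weights, $A$ is a random matrix with i.i.d. sub-Gaussian entries up to a deterministic scalar. Set $W^2 := \sum_{i=1}^{n} w_i^2$ and, on the event $W>0$, write $B := W^{-1}\sum_{i=1}^{n} w_i X_i$, so that $A = (W/n)\,B$, whence $\sigma_{\max}^2(A) = (W^2/n^2)\,\sigma_{\max}^2(B)$ and $\sigma_{\min}^2(A) = (W^2/n^2)\,\sigma_{\min}^2(B)$ (the event $W=0$ is null, or is handled trivially). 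It then suffices to control $\sigma_{\max}(B)$, $\sigma_{\min}(B)$ and $W^2/n$ separately and multiply.

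For the matrix $B$, each entry $B_{(j,k)} = W^{-1}\sum_i w_i X_{i(j,k)}$ is a linear combination of independent mean-zero variance-one sub-Gaussian variables, so by the standard sub-Gaussian linear-combination estimate (the same rotation-invariance property already invoked in the main proof) one has $\mathbb{E}\,B_{(j,k)}=0$, $\operatorname{Var}(B_{(j,k)}) = W^{-2}\sum_i w_i^2 = 1$, and $\|B_{(j,k)}\|_{\psi_2}\le c_0 K$ with $c_0$ an absolute constant — and, crucially, this bound on the sub-Gaussian norm does not depend on the weight vector. The entries are moreover mutually independent and identically distributed over $(j,k)$ since the $X_i$ have i.i.d. entries. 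Thus, conditionally on $\{\eta_i\}$, $B$ is a $d_1\times d_2$ matrix with i.i.d. mean-zero, variance-one, sub-Gaussian entries, and the standard non-asymptotic bound on the extreme singular values of such a matrix (assume $d_1\ge d_2$ without loss of generality) gives, with conditional probability at least $1-e^{-c_1 t}$ for each one-sided tail,
$$
\sqrt{d_1}-C\sqrt{d_2}-\sqrt{2t}\ \le\ \sigma_{\min}(B)\ \le\ \sigma_{\max}(B)\ \le\ \sqrt{d_1}+C\sqrt{d_2}+\sqrt{2t}.
$$
Because this estimate is uniform over the weights, it also holds unconditionally with the same probability.

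Next I would control $W^2/n = \tfrac1n\sum_{i=1}^{n}\psi_{\tau}(\eta_i)^2$. Each summand takes values in $[0,\tau^2]$ with mean $\mathbb{E}[\psi_{\tau}(\eta_i)^2]$, so Hoeffding's inequality gives, for each one-sided tail, $\bigl|W^2/n-\mathbb{E}[\psi_{\tau}(\eta_i)^2]\bigr|\le\tau^2\sqrt{2t/n}$ with probability at least $1-e^{-c_2 t}$. Intersecting the relevant one-sided events and substituting into $\sigma_{\max}^2(A)=(W^2/n)\,\sigma_{\max}^2(B)/n$ (respectively $\sigma_{\min}^2(A)=(W^2/n)\,\sigma_{\min}^2(B)/n$, valid in the regime $\sqrt{d_1}>C\sqrt{d_2}+\sqrt{2t}$) yields the two claimed inequalities, each with probability at least $1-2e^{-ct}$ with $c=\min(c_1,c_2)$.

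Everything here is routine modulo one point that must be handled carefully: the conditional singular-value concentration for $B$ has to be \emph{uniform} over the realizations of $\{\eta_i\}$ — equivalently, the constants $C$, $c_1$ above must not depend on the weight vector — so that the singular-value event decouples from the $\eta$-randomness and one may simply intersect it with the Hoeffding event; this is exactly why it matters that $\|B_{(j,k)}\|_{\psi_2}$ is bounded by a constant multiple of $K$ irrespective of the weights. The argument also tacitly uses that $\{\eta_i\}$ and $\{X_i\}$ are independent, so that conditioning on the $\eta_i$ leaves the law of the $X_i$ intact, and the lower bound on $\sigma_{\min}$ is of course only informative when $\sqrt{d_1}$ dominates $C\sqrt{d_2}+\sqrt{2t}$.
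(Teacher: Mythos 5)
Your proposal is correct and follows essentially the same route as the paper's proof: condition on the truncated noise, treat $\sum_i\psi_{\tau}(\eta_i)X_i$ conditionally as a sub-Gaussian random matrix and apply the standard extreme-singular-value bound (Vershynin, Theorem 5.39), then control $\frac{1}{n}\sum_i\psi_{\tau}(\eta_i)^2$ by Hoeffding's inequality and combine the two events. Your normalization via $B=W^{-1}\sum_i w_iX_i$ and the explicit union-bound intersection are only cosmetic refinements (and in fact state the uniformity in the weights more carefully than the paper does), so no substantive difference remains.
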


\begin{proof}
This lemma is proved in a similar manner to the proof of Lemma 6 in \cite{Zhang2020}. Since $\|\frac{1}{n}\sum_{i=1}^{n}\psi_{\tau}(\eta_{i})X_{i(j,k)}\|_{\psi_{2}}\leq K\sqrt{\sum_{i=1}^n\psi_{\tau}(\eta_{i})^2}/n$ for given $\{\psi_{\tau}(\eta_{i})\}_{i=1}^n$, by Theorem 5.39 in \cite{Vershynin2012}, we obtain that
\begin{align*}
	\operatorname{P}\left(\sigma_{\max}^2(A)\geq\frac{\sum_{i=1}^n\psi_{\tau}(\eta_{i})^2}{n^2}\left(\sqrt{d_{1}}+C\sqrt{d_{2}}+\sqrt{2t}\right)^2\big{|}\{\psi_{\tau}(\eta_{i})\}_{i=1}^n\right)\leq e^{-ct}, \\
	\operatorname{P}\left(\sigma_{\min}^2(A)\leq\frac{\sum_{i=1}^n\psi_{\tau}(\eta_{i})^2}{n^2}\left(\sqrt{d_{1}}-C\sqrt{d_{2}}-\sqrt{2t}\right)^2\big{|}\{\psi_{\tau}(\eta_{i})\}_{i=1}^n\right)\leq e^{-ct}.
\end{align*}
For $\frac{1}{n}\sum_{i=1}^n\psi_{\tau}(\eta_{i})^2$, by Hoeffding's inequality, we can get that
\begin{align*}
	\operatorname{P}\left(\frac{1}{n}\sum_{i=1}^n\psi_{\tau}(\eta_{i})^2-\mathbb{E}[\psi_{\tau}(\eta_{i})^2]\geq \tau^2\sqrt{\frac{2t}{n}}\right) & \leq e^{-t}, \\
	\operatorname{P}\left(\frac{1}{n}\sum_{i=1}^n\psi_{\tau}(\eta_{i})^2-\mathbb{E}[\psi_{\tau}(\eta_{i})^2]\leq -\tau^2\sqrt{\frac{2t}{n}}\right) & \leq e^{-t}.
\end{align*}
Therefore,
\begin{align*}
	&\operatorname{P}\left(\sigma_{\max}^2(A)\leq\frac{\mathbb{E}[\psi_{\tau}(\eta_{i})^2]+\tau^2\sqrt{\frac{2t}{n}}}{n}\left(\sqrt{d_{1}}+C\sqrt{d_{2}}+\sqrt{2t}\right)^2\right) \\
	\geq{} & \operatorname{P}\left(\frac{1}{n}\sum_{i=1}^n\psi_{\tau}(\eta_{i})^2\leq \mathbb{E}[\psi_{\tau}(\eta_{i})^2]+\tau^2\sqrt{\frac{2t}{n}}\right)\\
	&\times\operatorname{P}\left(\sigma_{\max}^2(A)\leq\frac{\sum_{i=1}^n\psi_{\tau}(\eta_{i})^2}{n^2}\left(\sqrt{d_{1}}+C\sqrt{d_{2}}+\sqrt{2t}\right)^2\big{|}\{\psi_{\tau}(\eta_{i})\}_{i=1}^n\right) \\
	\geq{}& 1-2e^{-ct},
\end{align*}
\begin{align*}
	&\operatorname{P}\left(\sigma_{\min}^2(A)\geq\frac{\mathbb{E}[\psi_{\tau}(\eta_{i})^2]-\tau^2\sqrt{\frac{2t}{n}}}{n}\left(\sqrt{d_{1}}-C\sqrt{d_{2}}-\sqrt{2t}\right)^2\right) \\
	\geq{}& \operatorname{P}\left( \frac{1}{n}\sum_{i=1}^n\psi_{\tau}(\eta_{i})^2\geq \mathbb{E}[\psi_{\tau}(\eta_{i})^2]-\tau^2\sqrt{\frac{2t}{n}}\right)\\
	&\times \operatorname{P}\left(\sigma_{\min}^2(A)\geq\frac{\sum_{i=1}^n\psi_{\tau}(\eta_{i})^2}{n^2}\left(\sqrt{d_{1}}-C\sqrt{d_{2}}-\sqrt{2t}\right)^2\big{|}\{\psi_{\tau}(\eta_{i})\}_{i=1}^n\right) \\
	\geq{}& 1-2e^{-ct}.
\end{align*}
\end{proof}

\begin{lemma}\label{lemma3}
Consider $\{X_{i}\}_{i=1}^{n}$ are $n$ i.i.d. random $d_{1}\times d_{2}$ matrices whose entries are i.i.d. sub-gaussian random variables and $\{y_{i}\}_{i=1}^n$ are from Eq.~\eqref{eq}. Then there exists a constant $C$ such that with probability at least $1-2 e^{(d_{1}+d_{2})(\log(7)-t)}$,
\begin{align*}
	\left\|\frac{1}{n}\sum_{i=1}^{n}\psi_{\tau}(y_{i})X_{i}-\mathbb{E}\left[y_{i}X_{i}\right]\right\|_{\text{op}}\leq C\left(\frac{df}{Mn}
		\right)^{\frac{1}{2}}\left(\sqrt{(d_{1}+d_{2})t/df}+(d_{1}+d_{2})t/df+1\right).
\end{align*}
\end{lemma}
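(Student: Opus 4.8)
The plan is to decompose the operator-norm error into a stochastic fluctuation and a truncation bias,
\[
\left\|\frac1n\sum_{i=1}^{n}\psi_{\tau}(y_{i})X_{i}-\mathbb{E}[y_{i}X_{i}]\right\|_{\text{op}}\le\left\|\frac1n\sum_{i=1}^{n}\bigl(\psi_{\tau}(y_{i})X_{i}-\mathbb{E}[\psi_{\tau}(y_{i})X_{i}]\bigr)\right\|_{\text{op}}+\bigl\|\mathbb{E}\bigl[(y_{i}-\psi_{\tau}(y_{i}))X_{i}\bigr]\bigr\|_{\text{op}},
\]
and to control both pieces through the variational formula $\|A\|_{\text{op}}=\sup_{u\in S^{d_{1}-1},\,v\in S^{d_{2}-1}}u^{\top}Av$. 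For a fixed pair $(u,v)$ I set $w_{i}:=u^{\top}X_{i}v=\sum_{(j,k)}X_{i(j,k)}u_{j}v_{k}$; by the rotation invariance of sub-Gaussian variables $w_{i}$ is sub-Gaussian with $\|w_{i}\|_{\psi_{2}}\lesssim K$, so that $u^{\top}\bigl(\frac1n\sum_i\psi_\tau(y_i)X_i-\mathbb{E}[y_iX_i]\bigr)v=\bigl(\frac1n\sum_i\psi_\tau(y_i)w_i-\mathbb{E}[\psi_\tau(y_i)w_i]\bigr)+\bigl(\mathbb{E}[\psi_\tau(y_i)w_i]-\mathbb{E}[y_iw_i]\bigr)$ is exactly of the scalar form already analyzed in (\ref{eq1})--(\ref{eq3}).

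For the stochastic term I would reuse (\ref{eq1})--(\ref{eq2}) verbatim with $w_{i}$ in place of $z_{i}$ and $\delta=1$: the moment inequality $\sum_{i=1}^{n}\mathbb{E}|\psi_{\tau}(y_{i})w_{i}|^{q}\le n\,q!\,M^{1/k}\bigl(eK\tau\sqrt{2k/(k-1)}\bigr)^{q-2}$ for $q\ge 2$ feeds Bernstein's inequality, which gives that, for each fixed $(u,v)$ and any $s>0$, $\bigl|\frac1n\sum_i\psi_\tau(y_i)w_i-\mathbb{E}[\psi_\tau(y_i)w_i]\bigr|\le C\sqrt{M^{1/k}s/n}+C\tau s/n$ with probability at least $1-2e^{-s}$. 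For the bias term, $\bigl|\mathbb{E}[(y_{i}-\psi_{\tau}(y_{i}))w_{i}]\bigr|=\bigl|\mathbb{E}[y_{i}1_{\{|y_{i}|>\tau\}}w_{i}]\bigr|$ is deterministic, and the Hölder--Markov estimate of (\ref{eq3}) with $\delta=1$ bounds it by $CM^{1/k}/\tau$ uniformly over $(u,v)$ (the contribution of $\langle\mathcal{X}_{i},\mathcal{A}^{*}\rangle$ entering only through the absolute constant, since $\|\mathcal{A}^{*}\|_{F}$ does not scale with the dimensions).

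Next I would discretize: fix $\tfrac13$-nets $\mathcal{N}_{1}\subset S^{d_{1}-1}$ and $\mathcal{N}_{2}\subset S^{d_{2}-1}$ with $|\mathcal{N}_{1}|\le 7^{d_{1}}$ and $|\mathcal{N}_{2}|\le 7^{d_{2}}$, so that the fluctuation matrix obeys $\|\cdot\|_{\text{op}}\le 3\max_{u\in\mathcal{N}_{1},\,v\in\mathcal{N}_{2}}|u^{\top}(\cdot)v|$, following the $\tfrac13$-net argument of Lemma E.5 of \cite{tensor}. A union bound over the $\le 7^{d_{1}+d_{2}}$ pairs with $s=(d_{1}+d_{2})t$ produces the exception probability $2\exp\bigl((d_{1}+d_{2})(\log 7-t)\bigr)$ and turns the per-pair bound into $C\sqrt{M^{1/k}(d_{1}+d_{2})t/n}+C\tau(d_{1}+d_{2})t/n$. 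Substituting $\tau\asymp(M^{1/k}n/df)^{1/2}$ from Theorem \ref{theorem1} then rewrites the stochastic part as $C\sqrt{M^{1/k}df/n}\bigl(\sqrt{(d_{1}+d_{2})t/df}+(d_{1}+d_{2})t/df\bigr)$ and the bias part as $C\sqrt{M^{1/k}df/n}$; adding the three contributions yields the claimed estimate.

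The only spots that require care are reproducing the moment bound (\ref{eq1}) for $\psi_\tau(y_i)w_i$ --- whose crux is the Hölder split peeling off the conditional $(1+\delta)$-th moment $M_{\delta}$ of $\varepsilon_{i}$ from the sub-Gaussian factor $w_{i}$, together with the sub-exponential control of $\langle\mathcal{X}_{i},\mathcal{A}^{*}\rangle w_{i}$ --- and keeping track of the net cardinality so that the rescaling $s\mapsto(d_{1}+d_{2})t$ places the exponent exactly at $(d_{1}+d_{2})(\log 7-t)$. There is no genuinely new obstacle here: compared with the earlier bound on $\langle\widetilde{\mathcal{A}}-\mathcal{A}^{*},\mathcal{T}\rangle$, the single change is replacing the net over low-rank unit-Frobenius tensors by the product of two spherical nets adapted to the operator norm.
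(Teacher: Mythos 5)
Your proposal is correct and follows essentially the same route as the paper's own proof: the identical bias-plus-fluctuation decomposition, the same reduction to the scalar quantity $u^{\top}X_{i}v$ handled via the bounds (\ref{eq1})--(\ref{eq3}) with $\delta=1$ and Bernstein's inequality, the same $\frac{1}{3}$-net and union bound with $s=(d_{1}+d_{2})t$ giving the exponent $(d_{1}+d_{2})(\log 7-t)$, and the same substitution $\tau\asymp(M^{1/k}n/df)^{1/2}$. The only differences are cosmetic (your net-approximation constant $3$ versus the paper's $\tfrac{9}{4}$, and how the $M$-dependent factors are absorbed into the constant $C$, an inconsistency already present in the paper's own statement).
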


\begin{proof}
By the triangle inequality,
\begin{align*}
	& \left\|\frac{1}{n}\sum_{i=1}^{n}\psi_{\tau}(y_{i})X_{i}-\mathbb{E}\left[y_{i}X_{i}\right]\right\|_{\text{op}} \\
	\leq{}& \left\|\mathbb{E}\left[\psi_{\tau}(y_{i})X_{i}\right]-
		\mathbb{E}\left[y_{i}X_{i}\right]\right\|_{\text{op}}+\left\|\frac{1}{n}\sum_{i=1}^{n}\psi_{\tau}(y_{i})X_{i}-\mathbb{E}\left[\psi_{\tau}(y_{i})X_{i}\right]\right\|_{\text{op}}.
\end{align*}
Denote $z_{i}:=u^{\top}X_{i}v$ where $u \in \mathbb{S}^{d_{1}-1}$ and $v \in \mathbb{S}^{d_{2}-1}$. $\frac{1}{n}\sum_{i=1}^{n}\psi_{\tau}(y_{i})u^{\top}X_{i}v=\frac{1}{n}\sum_{i=1}^{n}\psi_{\tau}(y_{i})z_{i}$. Since $z_{i}$ is sub-Gaussian random variable with $\|z_{i}\|_{\psi_{2}}=K$, by using the similar treatment with (\ref{eq3}), we obtain that for some constants $C_{1}$ and $C_{2}$ depending only on $K, k$, and $c$,
\begin{equation}\label{eq5}
	\begin{aligned}
		& \left\|\mathbb{E}\left[\psi_{\tau}(y_{i})X_{i}\right] - \mathbb{E}\left[y_{i}X_{i}\right]\right\|_{\text{op}} \\
		={}& \sup_{u,v}|\mathbb{E}\left[\psi_{\tau}(y_{i})z_{i}\right]-\mathbb{E}\left[y_{i}z_{i}\right]|\leq C_{1}\left(M^{1/k}+c_{1}^{2}\|\mathcal{A}^{*}\|_{F}^{2}\right)/\tau \\
		\leq{}& C_{2}\left(M+c_{1}^{2}\|\mathcal{A}^{*}\|_{F}^{2}\right)\left(\frac{df}{Mn}\right)^{\frac{1}{2}}.
	\end{aligned}
\end{equation}
For the second term, according to (\ref{eq1}) and (\ref{eq2}), there exists an absolute constant $C_{3}$ depending on $K, k$ and $c$ such that
\begin{align*}
	\operatorname{P}\left(\left|\frac{1}{n}\sum_{i=1}^{n}\psi_{\tau}(y_{i})z_{i}-\mathbb{E}
		\left[\psi_{\tau}(y_{i})z_{i}\right]\right|\leq C_{3}\sqrt{\frac{Mt}{n}}+C_{3}\frac{\tau t}{n}\right) \geq 1-2e^{-t}.
\end{align*}
Let $\mathcal{N}^{d_{1}}_{\frac{1}{3}}$ and $\mathcal{N}^{d_{2}}_{\frac{1}{3}}$ be $\frac{1}{3}$-nets of $\mathbb{S}^{d_{1}-1}$ and $\mathbb{S}^{d_{2}-1}$ respectively, where $|\mathcal{N}^{d_{1}}_{1/3}|\leq 7^{d_{1}}$ and $|\mathcal{N}^{d_{2}}_{1/3}|\leq 7^{d_{2}}$. There exist $u_1\in \mathcal{N}^{d_{1}}_{\frac{1}{3}}$ and $v_1\in \mathcal{N}^{d_{2}}_{\frac{1}{3}}$ such that $\left\|u-u_1\right\|_2 \leq 1 / 3$ and $\left\| v-v_1 \right\|_2 \leq 1 / 3$. Thus, for any matrix $A \in \mathbb{R}^{d_1 \times d_2}$, we have
\begin{align*}
u^{\top} A v&=u_1^{\top}A v_{1}+\left(u-u_1\right)^{\top} A v_1+u_1^{\top} A\left(v-v_1\right)+\left(u-u_1\right)^{\top} A\left(v-v_1\right) \\ &\leq \sup _{u \in \mathcal{N}_{\frac{1}{3}}^{d_1}, v \in \mathcal{N}_{\frac{1}{3}}^{d_2}} u^{\top} A v+\left(\frac{1}{3}+\frac{1}{3}+\frac{1}{9}\right) \sup _{u \in \mathbb{S}^{d_1-1}, v \in \mathbb{S}^{d_2-1}} u^{\top} A v.
\end{align*}
Thus, $\|A\|_{\text {op }} \leq \frac{9}{2} \sup _{\substack{u \in \mathcal{N}_{\frac{1}{3}}^{d_1}, v \in \mathcal{N}_{\frac{1}{3}}^{d_2}}} u^{\top} A v$.  Therefore, the following inequality holds:
\begin{align*}
	\left\|\frac{1}{n}\sum_{i=1}^{n}\psi_{\tau}(y_{i})X_{i}-\mathbb{E}\left[\psi_{\tau}(y_{i})X_{i}\right]\right\|_{\text{op}}\leq\frac{9}{2}\max_{\substack{u\in\mathcal{N}^{d_{1}}_{\frac{1}{3}}\\v\in\mathcal{N}_{\frac{1}{3}}^{d_{2}}}}\left|\frac{1}{n}\sum_{i=1}^{n}\psi_{\tau}(y_{i})z_{i}-\mathbb{E}\psi_{\tau}(y_{i})z_{i}\right|.
\end{align*}
By using the union bound for all $u\in\mathcal{N}^{d_{1}}_{\frac{1}{3}}$ and $v\in \mathcal{N}^{d_{2}}_{\frac{1}{3}}$ and (\ref{eq6}), we have with probability at least $1-2 e^{(d_{1}+d_{2})(\log(7)-t)}$,
\begin{align}\label{eq9}
		& \left\|\frac{1}{n}\sum_{i=1}^{n}\psi_{\tau}(y_{i})X_{i} - \mathbb{E}\left[\psi_{\tau}(y_{i})X_{i}\right]\right\|_{\text{op}} \notag \\
		\leq{}& C_{4}\sqrt{\frac{M (d_{1}+d_{2})t}{n}}+C_{4}\frac{\tau (d_{1}+d_{2})t}{n} \notag \\
		\leq{}& C_{5}M^{\frac{1}{2}}\left(\frac{df}{n}\right)^{\frac{1}{2}}\left(\sqrt{(d_{1}+d_{2})t/df}+(d_{1}+d_{2})t/df\right).
\end{align}
Therefore, the conclusion holds by combining (\ref{eq5}) with (\ref{eq9}).
\end{proof}



\bibliographystyle{elsarticle-harv}
\begin{singlespace}
\bibliography{references}
\end{singlespace}





\end{document}